  \def\Sigma{\textSigma}%
  \def\sigma{\textsigma}%
\numberwithin{equation}{section}
\numberwithin{equation}{subsection}
\theoremstyle{plain}
\newtheorem{theorem}[equation]{Theorem}
\newtheorem{lemma}[equation]{Lemma}
\newtheorem{proposition}[equation]{Proposition}
\newtheorem{corollary}[equation]{Corollary}
\newtheorem{definition}[equation]{Definition}
\newtheorem{conjecture}[equation]{Conjecture}
\theoremstyle{definition}
\newtheorem{remark}[equation]{Remark}
\DeclareMathOperator{\df}{def}
\DeclareMathOperator{\Hom}{Hom}
\DeclareMathOperator{\ssw}{sw}
\DeclareMathOperator{\supp}{supp}
\newcommand*{\pc}[1]{\operatorname{pc} #1}
\newcommand*{\constanttermatone}[1]{\operatorname{{coef}\sb{1}\sp{0}} #1}
\newcommand*{\Laurentpartat}[1]{\operatorname{L\sb{#1}}}
\DeclareMathOperator{\Pic}{Pic}
\newcommand{\Spinc}{\texorpdfstring{\ifmmode \operatorname{{Spin}\sp{c}} \else
  Spin\textsuperscript{c}\fi}{Spin-c}}
\newcommand{\et}{\EuScript{T}}
\newcommand{\cH}{\mathcal{H}}
\newcommand{\cL}{\mathcal{L}}
\newcommand{\cO}{\mathcal{O}}
\newcommand{\cV}{\mathcal{V}}
\newcommand{\setC}{\mathbb{C}}
\newcommand{\setQ}{\mathbb{Q}}
\newcommand{\setZ}{\mathbb{Z}}
\newcommand{\setN}{\mathbb{N}}
\providecommand{\coloneqq}{\mathrel{:=}}
\newcommand*{\mathid}[1]{\mathrm{#1}}
\begin{document}

\title[Surgery formula for the Seiberg\textendash Witten
invariants]{Surgery formula for Seiberg\textendash Witten invariants of
negative definite plumbed
  3-manifolds}

\author{Gábor Braun}
\thanks{The first author is partially supported by Hungarian National Research
  Fund, grants No.\ K~61007 and T~042769.}
\address{Rényi Institute of Mathematics\\
  Budapest\\
  Reáltanoda u. 13\textendash15\\
  1053\\
 Hungary}
\email{braung@renyi.hu}

\author{András Némethi}
\thanks{The second author is partially supported by OTKA grants.}
\address{Rényi Institute of Mathematics\\
  Budapest\\
  Reáltanoda u. 13\textendash15\\
  1053\\
 Hungary}
\email{nemethi@renyi.hu}

\keywords{plumbed 3-manifolds, rational homology spheres,
Seiberg\textendash Witten invariants, Casson invariant, local surface
singularities, geometric genus, Neumann\textendash Wahl conjecture,
Seiberg\textendash Witten invariant conjecture, surgery formula,
splice-quotient singularities.}

\subjclass[2000]{Primary  57M27, 57R57; Secondary 32C35, 32S05,
32S25, 32S45}

\begin{abstract}
  We derive a cut-and-paste surgery formula of Seiberg\textendash Witten invariants for
  negative definite plumbed rational homology \(3\)-spheres.  It is similar to
  (and motivated by) Okuma's recursion formula \cite[4.5]{Opg} targeting
  analytic invariants of splice-quotient singularities.
  Combining the two formulas
  automatically provides a proof of the equivariant version
  \cite[5.2(b)]{Line} of the Seiberg\textendash Witten invariant conjecture
  \cite{nemethi02._seiber_witten} for these singularities.
\end{abstract}

\maketitle

\section{Introduction}\label{Sec:1}

Problem 5 of the review article \cite{OSZ} of Ozsváth and Szabó is to develop
cut-and-paste techniques for calculating the Heegaard Floer homology of
\(3\)-manifolds.  In this article we obtain a possible answer 
at the level of the Seiberg\textendash Witten invariant (i.e.\ at the
level of the normalized Euler characteristic of the Heegaard Floer
homology): we provide the cut-and-paste surgery formula \eqref{eq:1} for the
Seiberg\textendash Witten invariants of plumbed rational homology \(3\)-spheres
associated with negative definite plumbing graphs.  In order to state it, we
fix some notations (for more details, see §\ref{Sec:2}).

For any graph \(G\), let \(\cV(G)\) denote its set of vertices.
Let \(\lvert S \rvert\) denote the size of the finite set \(S\).
Thus, \(\lvert \cV(G) \rvert\) is the number of vertices of \(G\).

Let $\Gamma$ be a connected plumbing graph.
Each vertex $w\in\cV(\Gamma)$ is
decorated by an integer $b_w$.  Let $\widetilde{X}(\Gamma)$ be the
\(4\)-manifold with boundary obtained by plumbing from \(\Gamma\), which 
we briefly recall.
The manifold \(\widetilde{X}(\Gamma)\) is a tubular neighbourhood of oriented \(2\)-spheres
\(E_w\) associated with the vertices \(w\) of the graph.
For every two adjacent vertices, their \(2\)-spheres  intersect transversally
at one point;
beside these, the \(2\)-spheres do not intersect each other.
The number \(b_w\) is the Euler number of the normal bundle of the
\(2\)-sphere of the vertex \(w\).

The manifold $\widetilde{X}(\Gamma)$ admits  a canonical \Spinc{} structure
$\widetilde{\sigma_{\mathid{can}}}$, see~\eqref{eq:4} for its characterization.

Set $\Sigma \coloneqq \partial \widetilde{X}(\Gamma)$.  We assume that $H_1(\Sigma; \setQ)=0$, or equivalently
that $\Gamma$ is a tree.

Set $L \coloneqq H_2(\widetilde{X}(\Gamma); \setZ)$ and \(L' \coloneqq
H^2(\widetilde{X}(\Gamma); \setZ)\).  These
groups are free with bases the classes \(E_w\) of the \(2\)-spheres and their
duals \(E^*_w\), respectively.

The graph \(\Gamma\) is \emph{negative definite} if the intersection form on \(L\)
is negative definite.  If this is the case then the canonical map \(L \to
L'\) is an embedding, which is an isomorphism over
$\setQ$, thus the intersection form extends to $L'$.  We shall write \((\cdot, \cdot)\)
for the intersection form and $x^2 \coloneqq (x,x)$ for any $x \in L'$.

For any \Spinc{} structure \(\sigma\), let \(c_1(\sigma) \in L'\) 
denote its first Chern class.

Finally, for any $\sigma\in \Spinc(\Sigma)$ and $v\in\cV(\Gamma)$, let $\cH_{\sigma,v}$ be the
rational function defined in (\ref{Hilbert}), which is a Weil-type twisted
zeta function.  We write $\cH^{\mathid{pol}}_{\sigma,v}$ for its \emph{polynomial part}
which is the unique polynomial for which \(\cH_{\sigma,v} - \cH^{\mathid{pol}}_{\sigma,v}\)
has negative degree (i.e.\ it is either \(0\) or the degree of the numerator
is less than the degree of the denominator).

\begin{theorem}
  \label{MTIntr}
  Let \(\Gamma\) be a connected negative definite plumbing graph of a rational
  homology \(3\)-sphere \(\Sigma\).  Let \(v\) be a vertex of \(\Gamma\), and let \(\Gamma_i\) be
  the components of \(\Gamma \setminus v\).  Let \(\widetilde{\sigma}\) be a \Spinc{} structure
  of \(\widetilde{X}(\Gamma)\) satisfying
  \begin{equation}
    \label{eq:2}
    -1 < \left( \frac{c_1(\widetilde{\sigma}) - c_1(\widetilde{\sigma_{\mathid{can}}})}{2}, E^*_v
    \right) \leq 0.
  \end{equation}
  Let \(\sigma\), \({\widetilde{\sigma}}_{i}\) and \(\sigma_i\) denote the restriction of
  \(\widetilde{\sigma}\) to \(\Sigma\), \(\widetilde{X}(\Gamma_i)\) and \(\Sigma_i \coloneqq \partial
  \widetilde{X}(\Gamma_i)\), respectively.  Then
  \begin{equation}\label{eq:1}
    \ssw_\sigma(\Sigma) + \frac{{c_1(\widetilde{\sigma})}^2 + \lvert \cV(\Gamma) \rvert}{8} =
    - \cH^{\mathid{pol}}_{\sigma,v}(1) + \sum_i \left( \ssw_{\sigma_i}(\Sigma_i) +
      \frac{{c_1(\widetilde{\sigma}_i)}^2 + \lvert \cV(\Gamma_i) \rvert}{8} \right).
  \end{equation}
\end{theorem}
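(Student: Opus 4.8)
The plan is to translate \eqref{eq:1} into an identity between the rational functions naturally attached to $\Gamma$ and to the $\Gamma_i$, and to prove that identity by cutting the plumbing tree at $v$ and keeping track of Laurent expansions at $t=1$. The first step replaces each modified Seiberg--Witten term by its combinatorial form: by the results recalled in \S\ref{Sec:2}, the quantity $\ssw_\sigma(\Sigma)+\tfrac{c_1(\widetilde\sigma)^2+\lvert\cV(\Gamma)\rvert}{8}$ equals, up to sign, the periodic constant of the $\widetilde\sigma$-part of the multivariable topological Poincar\'e series $Z_\Gamma(\mathbf t)=\prod_{w\in\cV(\Gamma)}(1-\mathbf t^{E^*_w})^{\delta_w-2}$, where $\delta_w$ is the valency of $w$; the same holds for each $\ssw_{\sigma_i}(\Sigma_i)+\tfrac{c_1(\widetilde\sigma_i)^2+\lvert\cV(\Gamma_i)\rvert}{8}$ with $Z_{\Gamma_i}$; and $\cH_{\sigma,v}$ is, up to the relevant twist, the one-variable reduction of $Z_\Gamma$ along the vertex $v$ as set up in \eqref{Hilbert}. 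After this step \eqref{eq:1} becomes a statement purely about $Z_\Gamma$, the $Z_{\Gamma_i}$ and $\cH_{\sigma,v}$.

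Next comes the tree decomposition. Since $\Gamma$ is a tree, deleting $v$ produces exactly $\delta_v$ components $\Gamma_i$, the vertex $v$ has a single neighbour $v_i$ in each $\Gamma_i$, and the intersection matrix of $\Gamma_i$ is the principal submatrix of that of $\Gamma$ on $\cV(\Gamma_i)$. Hence the $\Gamma_i$-duals $E^{*,i}_w$ and the ambient duals $E^*_w$ (for $w\in\cV(\Gamma_i)$) differ by a Schur-complement correction $E^*_w=E^{*,i}_w+\alpha_w E^*_v$ with explicit rational $\alpha_w$, so $\mathbf t^{E^*_w}$ differs from the corresponding $\Gamma_i$-monomial only by a power of $\mathbf t^{E^*_v}$. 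Together with the valency bookkeeping $\delta_w(\Gamma)=\delta_w(\Gamma_i)$ for $w\neq v_i$ and $\delta_{v_i}(\Gamma)=\delta_{v_i}(\Gamma_i)+1$, the product defining $Z_\Gamma$ then factors as an explicit ``$v$-factor'' --- built from $(1-\mathbf t^{E^*_v})^{\delta_v-2}$ and the $\delta_v$ correction terms $1-\mathbf t^{E^{*,i}_{v_i}}(\mathbf t^{E^*_v})^{\alpha_{v_i}}$ --- times the $Z_{\Gamma_i}$ re-expressed in the ambient variables. Hypothesis \eqref{eq:2} enters precisely here: it normalises $\widetilde\sigma$ along the $E^*_v$-direction, which makes this the correct reduction, pins down the polynomial-versus-negative-degree splitting of $\cH_{\sigma,v}$, and identifies the reduced ``$v$-factor'' with $\cH_{\sigma,v}$.

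Finally one computes the periodic constants. Using the rules for how the periodic constant interacts with products and with polynomial factors (recalled in \S\ref{Sec:2}) and feeding in the factorisation above, the difference $\pc{Z_\Gamma}-\sum_i\pc{Z_{\Gamma_i}}$ collapses to a one-variable expression in $\cH_{\sigma,v}$, its polynomial part, and its Laurent expansion at $t=1$. It then remains to match this against $-\cH^{\mathid{pol}}_{\sigma,v}(1)$ plus the normalisation discrepancy $\tfrac{c_1(\widetilde\sigma)^2+\lvert\cV(\Gamma)\rvert}{8}-\sum_i\tfrac{c_1(\widetilde\sigma_i)^2+\lvert\cV(\Gamma_i)\rvert}{8}=\tfrac{c_1(\widetilde\sigma)^2-\sum_i c_1(\widetilde\sigma_i)^2+1}{8}$, the latter being evaluated directly from the Schur complement of the $v$-block of the intersection form.

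The step I expect to be the main obstacle is this last matching. One must show that passing from the multivariable $Z_\Gamma$ to the one-variable $\cH_{\sigma,v}$ --- in particular the re-coordinatisation by powers of $\mathbf t^{E^*_v}$ --- neither loses nor distorts the periodic constant, a reduction theorem in the style of \S\ref{Sec:2} in which negative definiteness (placing $E^*_v$ in the interior of the relevant cone) is essential; and one must check that the signs and the $c_1^2$-normalisation terms conspire to leave exactly $-\cH^{\mathid{pol}}_{\sigma,v}(1)$. This is where \eqref{eq:2} is indispensable and where the explicit shape of the Weil-type twisted zeta function $\cH_{\sigma,v}$ of \eqref{Hilbert} must be exploited; the entire computation runs parallel to Okuma's recursion \cite[4.5]{Opg}, which serves as the guide for the combinatorics.
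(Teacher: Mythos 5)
Your proposal takes a genuinely different route from the paper, but it rests on two substantial unproven claims, and this is a real gap rather than a detail.

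The paper never identifies the normalized Seiberg--Witten invariant
\(\ssw_\sigma(\Sigma) + \tfrac{c_1(\widetilde\sigma)^2 + \lvert\cV(\Gamma)\rvert}{8}\)
with the periodic constant of a multivariable series
\(Z_\Gamma(\mathbf t)=\prod_w(1-\mathbf t^{E^*_w})^{\delta_w-2}\).
Instead it starts from Nicolaescu's formula \eqref{RT1},
\(\ssw_\sigma(\Sigma)=\lambda(\Sigma)/\lvert H\rvert - \et_\sigma(\Sigma)\),
and reduces \eqref{eq:1} to three separate additivity identities:
\eqref{AK} for \(c_1^2+\lvert\cV\rvert\), \eqref{AL} for the Casson--Walker invariant, and \eqref{AT} for the Reidemeister--Turaev torsion.
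The first two are proved by elementary lattice computations coming from Lemma~\ref{DetRel} (a Schur-complement identity for \(E^*_v\), essentially the same idea you mention but used in a very limited and explicit way).
The torsion identity \eqref{AT} is the bulk of the work: it is proved in \S\ref{Sec:5} by a Fourier-transform computation, introducing pseudo-characters \eqref{PCH} to split the sum defining \(\et_\sigma\) into a piece that matches \(\sum_i\et_{\sigma_i}\), a piece that matches \(\cH^{\mathid{pol}}_{\sigma,v}(1)\) via the Laurent-part analysis of \(P_{\psi,v}\) at \(t=1\) (Lemmas~\ref{lem:1}, \ref{Psi}), and an explicit correction constant.

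Your step~1 --- ``\(\ssw_\sigma + (c_1^2+\lvert\cV\rvert)/8 = -\pc{Z_{\Gamma,\sigma}}\)'' --- is a theorem (Némethi's identification of the SW invariant with the periodic constant of the multivariable topological Poincaré series), but it is \emph{not} available in this paper, nor is it cited, and historically it was established after and partly because of the present surgery formula.  Likewise, your closing observation that the multivariable-to-one-variable reduction of the periodic constant is the ``main obstacle'' is exactly right: that reduction is itself a nontrivial theorem (a ``reduction theorem'' in later work of László--Némethi), and the paper proves nothing of the sort --- Lemma~\ref{LPC} handles only a single variable.  So, as written, your argument relies on two deep intermediate results that need their own proofs, whereas the paper's strategy is entirely self-contained: it feeds the determinant identities of \S\ref{Sec:3} and the one-variable Laurent analysis of \(P_{\psi,v}\) directly into the three additivity formulas.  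Conceptually the two routes are related --- both exploit the Schur complement at \(v\) and the splitting of the generating function at the cut vertex --- but yours outsources the hard analytic content to unproven black boxes, while the paper carries it out in full at the level of torsion and Casson--Walker.
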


\begin{remark}\label{OSZuj}
The \Spinc{} structure \(\sigma\) does not uniquely determine
 \(\widetilde{\sigma}\) and its restriction
 \({\widetilde{\sigma}}_{i}\)
via \eqref{eq:2}.  Nevertheless, the \Spinc{} structure  
$\sigma_i$  is independent of  the choice of  $\widetilde{\sigma}$; it 
depends only on $\sigma$. 
\end{remark}
\begin{remark}\label{OSZ2}
  Notice that this formula differs from those obtained from surgery exact
  triangles (of different versions) of Floer homologies (see e.g.\
  \cite{OSzP}): the surgery exact triangles involve three different
  \(3\)-manifolds, while our formula only connects the plumbed \(3\)-manifolds
  associated with $\Gamma$ and $\Gamma \setminus v$ (and another type of invariant, namely
  $\cH_{\sigma,v}$).  Moreover, in general, the surgery exact triangles mix several
  \Spinc{} structures (involving all the extensions $\widetilde{\sigma}$), while
  our formula involves only one extension $\widetilde{\sigma}$
  and one induced pair $(\widetilde{\sigma}_i,\sigma_i)$
  for any fixed $\sigma$.
\end{remark}

The proof uses the fact 
(see \cite[Theorem 2.4]{Nico5}, recalled here in \eqref{RT1})
that the Seiberg\textendash Witten invariant of $\Sigma$ is a linear
combination  of the
Reidemeister\textendash Turaev torsion $\et$ (\cite{Tu5}) and the 
Casson\textendash Walker invariant $\lambda$, together with explicit formulas for these
invariants.

In particular, the formula above is the consequence  of additivity formulas
for the invariants \({c_1(\widetilde{\sigma})}^2 + \lvert \cV(\Gamma) \rvert\),
$\et$ and $\lambda$, stated in \eqref{AK}, \eqref{AL} and \eqref{AT},
which are interesting for their own sake as well.

In §\ref{Sec:7} we exemplify (\ref{MTIntr}) for Seifert manifolds and
surgery manifolds $S^3_{-d}(K)$.  There we emphasize the arithmetical nature of
$\cH_{\sigma,v}$, too.

Any negative definite plumbed \(3\)-manifold appears as the link of a
complex surface singularity.  For some singularity links, the Taylor expansion
of $\cH_{\sigma,v}$ at the origin appears as the Hilbert (Poincaré) series of a
certain graded $\setC$-algebra.  In this way, $\cH^{\mathid{pol}}_{\sigma,v}(1)$
can be related with analytic invariants of the singularity.
For applications of (\ref{MTIntr}) in singularity theory, see
§\ref{Back} and (\ref{star}).

\section{Application in singularity theory.}
\label{Back}

\subsection{Seiberg\textendash Witten invariant conjecture}
\label{sec:seib-witt-conj}

 Let $(X,o)$ be an isolated
complex analytic normal surface singularity whose link
$\Sigma$ is a rational homology sphere.
Let $\pi\colon \widetilde{X}\to X$
be a good resolution with exceptional set $E$ (with irreducible components
${\{E_w\}}_w$), and $\Gamma$ its dual resolution graph (for details see e.g.\ 
\cite[§2.2]{INV}).  Then (the underlying $C^\infty$ manifold of)
$\widetilde{X}$ is the plumbed \(4\)-manifold
$\widetilde{X}(\Gamma)$ (for which in the sequel we will use all the
above notations).  The intersection form on $L$ is automatically negative
definite.

The group $L$ can also be regarded as the group of integral cycles (divisors)
of type $l = \sum_w m_w E_w$ in $\widetilde{X}$ with $m_w \in \setZ$.  As customary, we
denote by \(\cO_{\widetilde{X}}(l)\) the line bundle associated with \(l\).
This map $l\mapsto \cO_{\widetilde{X}}(l)$
extends uniquely to a group homomorphism $L'\to \Pic(\widetilde{X})$,
denoted similarly by $l'\mapsto \cO_{\widetilde{X}}(l')$, such that the Chern class
(multidegree) satisfies $c_1(\cO_{\widetilde{X}}(l'))=l'$ (see
\cite[3.4\textendash3.6]{Line}).

As usual, $h^1(\cL)$ denotes $\dim_\setC H^1(\widetilde{X},\cL)$.
In this way,
 the \emph{geometric genus} is
$p_g\coloneqq h^1(\cO_{\widetilde{X}})$.
 More generally, for the special set of
representatives 
\begin{equation*}
R\coloneqq  \left\{ \sum_wr_wE_w\in L': -1<
r_w\leq 0 \right\} \subset L'
\end{equation*} 
of the classes $L'/L$, we get the \emph{equivariant geometric genera}
${\{h^1(\cO_{\widetilde{X}}(l'))\}}_{l'\in R}$ of $(X,o)$ (the $L'/L=H_1(\Sigma; \setZ)$
eigen-decomposition of the geometric genus of the universal abelian cover of
$(X,o)$, see \cite[3.7]{Line} and \cite[2.2(3)]{Opg}).  They are subtle
analytic invariants of $(X,o)$, which guide crucial analytic aspects (e.g.\
equisingular deformations).  In general, they are not topological;
nevertheless, in \cite[5.2(b)]{Line}, the second author formulated essentially
the following conjecture, which predicts that in special cases, these
invariants can be recovered from the \emph{link} $\Sigma$:

\begin{conjecture}[Seiberg\textendash Witten invariant conjecture \cite{Line}]
\label{CONJ}
 Set $L_e$ for the \emph{effective}
integral cycles, i.e.\ $L_e\coloneqq \{\sum_w m_w E_w :
m_w\geq 0 \text{ for all $w$}\}$.  Set $R+L_e\coloneqq \bigcup_{l'\in
R}(l'+L_e)\subset L'$.

If the analytic structure of $(X,o)$ is `nice', then for all \(l' \in
  R+L_e\) one has
\begin{equation}\label{eq:3}
  -h^1 \left(\cO_{\widetilde{X}}(l') \right) = \ssw_{[l']*\sigma_{\mathid{can}}}(\Sigma)
  + \frac{{(c_1(\widetilde{\sigma_{\mathid{can}}}) + 2l')}^2 + \lvert \cV(\Gamma) \rvert
  }{8}.
\end{equation}
(For the definition of the \Spinc{} structure $[l']*\sigma_{\mathid{can}}$
of $\Sigma$, see §\ref{Char}.) 
\end{conjecture}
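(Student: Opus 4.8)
I interpret \emph{`nice'} as \emph{splice-quotient in the sense of Neumann\textendash Wahl}, so the aim is to prove \eqref{eq:3} for every \(l'\in R+L_e\) when \((X,o)\) is a splice-quotient singularity with rational homology sphere link \(\Sigma\). The plan is to match, term by term, two recursion formulas of the same shape and then induct on \(\lvert\cV(\Gamma)\rvert\). On the topological side I use the surgery formula \eqref{eq:1} for
\(A(\Gamma,\widetilde{\sigma})\coloneqq \ssw_\sigma(\Sigma)+\big({c_1(\widetilde{\sigma})}^2+\lvert\cV(\Gamma)\rvert\big)/8\);
on the analytic side, Okuma's recursion \cite[4.5]{Opg} for the equivariant geometric genus \(B(\Gamma,l')\coloneqq -h^1(\cO_{\widetilde{X}}(l'))\). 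With \(\widetilde{\sigma}\) the \Spinc{} structure on \(\widetilde{X}\) having \(c_1(\widetilde{\sigma})=c_1(\widetilde{\sigma_{\mathid{can}}})+2l'\) and \(\sigma=[l']*\sigma_{\mathid{can}}\) its restriction, the claim \eqref{eq:3} is exactly \(A(\Gamma,\widetilde{\sigma})=B(\Gamma,l')\). Both recursions peel off the same vertex \(v\), produce the same pieces \(\Gamma_i\), and — the crucial point — carry the \emph{same} correction term: the correction in \eqref{eq:1} is \(-\cH^{\mathid{pol}}_{\sigma,v}(1)\), while Okuma's correction is the value at \(1\) of the polynomial part of an equivariant divisorial Poincar\'e series of \((X,o)\) attached to \(v\); for a splice-quotient this series coincides with \(\cH_{\sigma,v}\) of \eqref{Hilbert}, by the explicit monomial splice-quotient equations (equivalently, by the End Curve Theorem of Neumann\textendash Wahl and Okuma). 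Thus the two recursions would become literally the same equation in the unknowns \(A\) and \(B\).

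Granting this, the induction would run as follows. If \(\Gamma\) has no node it is a string, \(\Sigma\) is a lens space, and \((X,o)\) is a cyclic quotient; its universal abelian cover is smooth, so all its equivariant geometric genera vanish and \eqref{eq:3} reduces to \(\ssw_\sigma(\Sigma)+\big({c_1(\widetilde{\sigma})}^2+\lvert\cV(\Gamma)\rvert\big)/8=0\), the classical evaluation for lens spaces; this is the base case. If \(\Gamma\) has a node, fix one as \(v\). For this \(v\) the given \(l'\) need not satisfy \eqref{eq:2}, so a preliminary reduction replaces \(l'\) by \(l'+nE_v\) for a suitable \(n\in\setZ\); using the cohomology sequence of \(\cO_{\widetilde{X}}(l')\hookrightarrow\cO_{\widetilde{X}}(l'+E_v)\) and Riemann\textendash Roch one checks that both sides of \eqref{eq:3} change by the same amount — \(\sigma\), hence \(\ssw_\sigma(\Sigma)\), is unchanged since \(E_v\in L\), and the change of \({c_1(\widetilde{\sigma})}^2\) matches that of \(h^1\) once \(h^0(\cO_{\widetilde{X}}(l'+E_v))=h^0(\cO_{\widetilde{X}}(l'))\), which the choice of \(n\) arranges. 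One may thus assume \(-1<(l',E^*_v)\le 0\), so Theorem~\ref{MTIntr} applies at \(v\); and Okuma's construction guarantees that \cite[4.5]{Opg} reduces \(h^1(\cO_{\widetilde{X}}(l'))\) to the invariants \(h^1(\cO_{\widetilde{X}_i}(l'_i))\) of splice-quotient singularities attached to the legs \(\Gamma_i\) (with the induced splice-diagram data and strictly fewer vertices), where \(l'_i\) is the restriction of \(l'\) and matches the \(\widetilde{\sigma}_i,\sigma_i\) of Theorem~\ref{MTIntr} (cf.\ Remark~\ref{OSZuj}). Subtracting \eqref{eq:1} from \cite[4.5]{Opg} cancels the common correction (the additivity \eqref{AK} being already built into \eqref{eq:1}) and yields \(A(\Gamma,\widetilde{\sigma})-B(\Gamma,l')=\sum_i\big(A(\Gamma_i,\widetilde{\sigma}_i)-B(\Gamma_i,l'_i)\big)\), which vanishes by the inductive hypothesis.

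The principal obstacle is the identification of the two correction terms: showing that, for a splice-quotient, the equivariant Poincar\'e series feeding Okuma's recursion \cite[4.5]{Opg} equals the Weil-type zeta function \(\cH_{\sigma,v}\) of \eqref{Hilbert}. This is the step that genuinely uses the hypothesis that the analytic structure is ``nice'' — it rests on the explicit monomial form of the splice-quotient equations and the ensuing control of the divisorial filtration — and is the reason the conjecture is established only for this class. A secondary, more clerical but error-prone, difficulty is the bookkeeping of the induced data \((\widetilde{\sigma}_i,\sigma_i,l'_i)\) and of \({c_1}^2+\lvert\cV\rvert\) through the recursion, the check that the legs carry a splice-quotient structure on a smaller graph so that the induction is well founded, and the preliminary reduction to \eqref{eq:2}; the base case for cyclic quotients and the vanishing of their equivariant geometric genera are classical.
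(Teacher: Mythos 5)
Your overall strategy is the one the paper actually follows: Conjecture~(\ref{CONJ}) is proved for splice-quotients (Corollary~(\ref{TH})) precisely by playing the surgery formula \eqref{eq:1} against Okuma's recursion \cite[4.5]{Opg}, inducting on \(\lvert\cV(\Gamma)\rvert\), and identifying the two correction terms; the only organizational difference is that the paper first abstracts the induction into a criterion for any family closed under splitting (Theorem~(\ref{AN}), with \emph{rational} singularities as the base case via \cite[Theorem~6.2]{Line} --- your lens-space/cyclic-quotient base case is the special string case of this) and then quotes Okuma's results (\cite[2.16, 4.2(3), 4.5]{Opg}) together with Lemma~(\ref{LPC}). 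Note that Lemma~(\ref{LPC}), i.e.\ the equality of Okuma's \emph{periodic constant} with \(R^{\mathid{pol}}(1)\), is a small but necessary step which you silently assume when you describe Okuma's correction as ``the value at \(1\) of the polynomial part''; the identification of Okuma's equivariant Poincar\'e series with \(\cH_{\sigma,v}\) for splice-quotients is, both for you and for the paper, delegated to Okuma's work, so it is a citation rather than an obstacle.

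The one step of your outline that does not stand as written is the preliminary reduction replacing \(l'\) by \(l'+nE_v\). It is unnecessary for \(l'\in R\): there \((l',E^*_v)=r_v\in(-1,0]\), so \eqref{eq:2} holds automatically, and the passage from \(R\) to all of \(R+L_e\) is exactly the statement that \(h^1(\cO_{\widetilde{X}}(l'))+\bigl({(c_1(\widetilde{\sigma_{\mathid{can}}})+2l')}^2+\lvert\cV(\Gamma)\rvert\bigr)/8\) depends only on \([l']\) on \(R+L_e\), i.e.\ \cite[5.3(c)]{Line}, which the paper simply cites in Remark~\ref{RIntr}(\ref{RIntr2}). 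Your substitute argument --- ``the change of \({c_1}^2\) matches that of \(h^1\) once \(h^0(\cO_{\widetilde{X}}(l'+E_v))=h^0(\cO_{\widetilde{X}}(l'))\), which the choice of \(n\) arranges'' --- is not a proof: step by step one needs the vanishing of \(H^0\) (or \(H^1\)) of the quotient line bundle on \(E_v\cong\mathbb{P}^1\), whose degree involves the coefficients of \(l'\) at the \emph{neighbouring} vertices; for \(l'\in R+L_e\) these can be arbitrarily large, so a purely \(E_v\)-directional exact-sequence argument can hit steps where the sign condition fails. Moreover, after your adjustment the cycle generally lies in \(R+\sum_{w\neq v}\setZ_{\geq0}E_w\) rather than in \(R\), so applying Okuma's recursion there needs the extension of the additivity \eqref{eq:6} beyond \(R\) (the paper's Remark~\ref{rem:extend}), which again rests on \cite[5.3(c)]{Line} together with \eqref{AK}. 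Replace the ad hoc reduction by the reduction to \(l'\in R\) via \cite[5.3(c)]{Line} and the rest of your outline coincides with the paper's proof.
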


\begin{remark}\label{RIntr}\mbox{}
  \begin{enumerate}
  \item\label{RIntr3} It is part of the conjecture to clarify the meaning of
    `nice'.  In the
    original version \cite{Line,nemethi02._seiber_witten} the conjecture 
    was formulated for all
    $\setQ$-Gorenstein singularities, but counterexamples are given in \cite[§4]{SI}.
    On the other hand,  the conjecture holds for all rational singularities 
    (\cite{Line,Graded}, see also \cite{NOSZ}), and, in fact, here we shall
    prove it for all splice-quotient singularities, see
    (\ref{TH}).  Restricted to the case of the canonical \Spinc{} structure,
    it was verified for elliptic Gorenstein singularities (by combining
    \cite{Ninv} and \cite{NOSZ}), singularities with good $\setC^*$ action
    (\cite{nemethi04._seiber_witten}), and suspension hypersurface
    singularities defined by $f(x,y) + z^n = 0$
    with $f$ irreducible (\cite{nemethi05._seiber_witten}).  For a review of
    related  problems,
    see \cite{INV, Graded}.  For related results, see
    \cite{Co,CoS,CoS2,FMS,FS,NW,NWuj}.
  \item\label{item:1} As a byproduct of the main Theorem (\ref{MTIntr}),
    in Theorem (\ref{AN}) we provide a criterion which
    characterizes the singularities  satisfying \eqref{eq:3}.
  \item\label{RIntr1} The special case of the canonical \Spinc{} structure was
    conjectured in \cite{nemethi02._seiber_witten}.  It generalizes the Casson invariant
    conjecture of Neumann and Wahl formulated for any isolated complete
    intersection with \emph{integral} homology sphere link \cite{NW}.
  \item\label{RIntr2} In fact, \eqref{eq:3} essentially consists
    of (only) $\lvert H_1(\Sigma; \setZ) \rvert $ different identities.
    The reason is that the expression
    \begin{equation*}
      h^1 \left( \cO_{\widetilde{X}}(l') \right) 
      + \frac{{(c_1(\widetilde{\sigma_{\mathid{can}}}) + 2l')}^2 + \lvert \cV(\Gamma) \rvert
      }{8}.
    \end{equation*}
    depends only on $[l'] \in H_1(\Sigma; \setZ)$ for $l' \in R + L_e$
    by~\cite[5.3(c)]{Line}. 
    Therefore, it is enough to verify the identity~\eqref{eq:3}, say,
    for all $l' \in R$.
  \end{enumerate}
\end{remark}

\subsection{Application}
\label{sec:application}

Using the main theorem (\ref{MTIntr}), the above Seiberg\textendash Witten invariant
conjecture~(\ref{CONJ}) may be transformed into an additivity property of
analytic invariants $h^1(\cL)$.  In order to state it, we need the following
notation.

  For a fixed vertex \(v\) of the graph \(\Gamma\),
  let \(\Gamma_i\) be the components of \(\Gamma \setminus v\), and let
${\widetilde{X}}_i$ be a small tubular neighbourhood of $E_i \coloneqq \bigcup_{w \in
  \cV(\Gamma_i)} E_w$ in $\widetilde{X}$.  Let $(X_i,o)$ be the normal surface
singularity (with dual resolution graph $\Gamma_i$) obtained by collapsing the
curve $E_i \subset {\widetilde{X}}_i$ to a point.

\begin{theorem}
  \label{AN}
  Consider a family of singularities which satisfy the next property: for any
  non-rational $(X,o)$ in the family, there exists at least one vertex $v$
  (called \emph{splitting} vertex) in its (minimal) resolution graph \(\Gamma\)
  such that all the singularities $(X_i,o)$ are in the family.

  Then, for such a family, the validity of (\ref{CONJ}) 
for all the members of the family is equivalent to the next additivity
  property: every non-rational singularity \((X,o)\) in the family has a
splitting  vertex \(v\) satisfying:
  \begin{equation}\label{eq:6}
    h^1(\cO_{\widetilde{X}}(l'))= \cH_{\sigma,v}^{\mathid{pol}}(1) + 
    \sum_i h^1(\cO_{{\widetilde{X}}_i}(R_i(l'))) \quad \text{for \(l' \in R\)},
  \end{equation}
where $R_i$ is the natural cohomological 
restriction defined in (\ref{MT})(\ref{item:3}).
\end{theorem}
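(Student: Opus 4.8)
The plan is to use the surgery formula \eqref{eq:1} of Theorem~\ref{MTIntr} as a dictionary that converts the identity \eqref{eq:3} written for $\Gamma$ into the same identity written for each of the pieces $\Gamma_i$, the discrepancy between the two sides being precisely the additivity \eqref{eq:6}. Fix a non-rational singularity $(X,o)$ of the family together with a splitting vertex $v$; by Remark~\ref{RIntr}(\ref{RIntr2}) it is enough to work with $l'\in R$. Let $\widetilde{\sigma}$ be a \Spinc{} structure of $\widetilde{X}$ with $c_1(\widetilde{\sigma})=c_1(\widetilde{\sigma_{\mathid{can}}})+2l'$ that is adapted to $v$ in the sense of \eqref{eq:2}; its existence, and the fact that its restriction to $\Sigma$ is the \Spinc{} structure $[l']*\sigma_{\mathid{can}}$ appearing in \eqref{eq:3}, follow from the construction of $[l']*\sigma_{\mathid{can}}$ in \S\ref{Char} and from Remark~\ref{RIntr}(\ref{RIntr2}). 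With this choice the left-hand side of \eqref{eq:1} is, verbatim, the right-hand side of \eqref{eq:3} for $(X,o)$.

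The core of the argument is to identify the $i$-th summand $\ssw_{\sigma_i}(\Sigma_i)+\tfrac{c_1(\widetilde{\sigma}_i)^2+\lvert\cV(\Gamma_i)\rvert}{8}$ of \eqref{eq:1} with the right-hand side of \eqref{eq:3} for $(X_i,o)$ evaluated at the cycle $R_i(l')$. For this I would check the following compatibilities of the inclusion $\widetilde{X}_i\hookrightarrow\widetilde{X}$ followed by the contraction $\widetilde{X}_i\to X_i$: the cohomological restriction carries $E^*_w$ to the corresponding dual of $\Gamma_i$ for $w\in\cV(\Gamma_i)$ and annihilates the remaining $E^*_w$, so that $c_1(\widetilde{\sigma_{\mathid{can}}})$ restricts to the canonical class of $\widetilde{X}_i$ and, with $R_i$ the restriction of (\ref{MT})(\ref{item:3}), $c_1(\widetilde{\sigma}_i)$ equals that canonical class plus $2R_i(l')$; the $H_1$-torsor action $*$ commutes with restriction, whence $\sigma_i$ (which depends only on $\sigma$ by Remark~\ref{OSZuj}) equals $[R_i(l')]*\sigma_{\mathid{can}}$ on $\Sigma_i$; and, writing $l'=\sum_wr_wE_w$ with $-1<r_w\leq 0$, a coordinate-wise inspection of $R_i(l')$ shows it lies in the range of cycles (the analogue for $\Gamma_i$ of $R+L_e$) on which \eqref{eq:3} is asserted. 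Granting these, \eqref{eq:3} for $(X_i,o)$ rewrites the $i$-th summand of \eqref{eq:1} as $-h^1(\cO_{\widetilde{X}_i}(R_i(l')))$; substituting into \eqref{eq:1} and changing signs yields \eqref{eq:6}. In particular this already proves the implication ``validity of (\ref{CONJ}) for the family $\Rightarrow$ additivity'': $(X,o)$ and all its $(X_i,o)$ lie in the family, so \eqref{eq:3} is available for all of them.

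For the converse I would argue by induction on $\lvert\cV(\Gamma)\rvert$. If $(X,o)$ is rational, \eqref{eq:3} holds by \cite{Line,Graded}. If $(X,o)$ is not rational, pick a splitting vertex $v$; each $(X_i,o)$ lies in the family and its resolution graph has strictly fewer vertices, so \eqref{eq:3} holds for every $(X_i,o)$ by the inductive hypothesis. Running the chain of identities of the previous paragraph backwards — \eqref{eq:3} for the $(X_i,o)$ to evaluate the summands, then \eqref{eq:6} to collect them, then \eqref{eq:1} — produces \eqref{eq:3} for $(X,o)$, which (again by Remark~\ref{RIntr}(\ref{RIntr2}), since it has been obtained for all $l'\in R$) completes the induction step.

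I expect the technical heart — and essentially the only place where more than bookkeeping is needed — to be the package of compatibility checks in the second paragraph: arranging the restriction map $R_i$ so that it is simultaneously compatible with the canonical \Spinc{} structure, with the action $*$, and with the range of cycles on which Conjecture~\ref{CONJ} is formulated, so that when \eqref{eq:3} for $(X_i,o)$ is inserted into the $i$-th term of \eqref{eq:1} the quadratic terms cancel exactly and no residual correction survives. Everything else reduces to the two inputs \eqref{eq:1} and the rational case, together with the closure hypothesis on the family.
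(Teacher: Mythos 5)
Your proof is correct and takes essentially the same route as the paper: the forward implication comes from substituting \eqref{eq:3} into \eqref{eq:1} to isolate \eqref{eq:6}, and the converse is an induction on the number of vertices with rational singularities as the base case, reassembling \eqref{eq:3} for \((X,o)\) from \eqref{eq:3} applied to the pieces \((X_i,o)\) together with \eqref{eq:6} and \eqref{eq:1}. The compatibility checks you flag as the technical heart (restriction carries the canonical \Spinc{} structure to the canonical one, commutes with the torsor action, and sends \(l' \in R\) into the admissible range \(R(\Gamma_i)+L{(\Gamma_i)}_e\)) are exactly the ones the paper settles around Definition~(\ref{MT}) and Remark~(\ref{OSZuj}) and then invokes via Remarks~(\ref{RIntr})(\ref{RIntr2}) and~(\ref{rem:extend}).
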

Note that the above additivity property \eqref{eq:6}
does not involve any part of Seiberg\textendash Witten theory. 

\begin{remark}\label{rem:extend}
  For fixed \((X,o)\) and \(v\),
  the validity of \eqref{eq:6}  for all \(l' \in R\) implies
  its validity for all \(l' \in R + \sum_{w \neq v} \setZ_{{}\geq0} E_w\).

  The reason is that
  \([l'] = [l' + \sum_{w \neq v} m_w E_w]\) and
  \([R_i(l')] = [R_i(l' + \sum_{w \neq v} m_w E_w])\)
  for any integers \(m_w\),
  hence Remark~\ref{RIntr}(\ref{RIntr2}) and Equation~\eqref{AK} applies to show
  the desired implication.
\end{remark}

For splice-quotient singularities, the additivity formula \eqref{eq:6}
was proved by T. Okuma in \cite{Opg}.  In fact, Okuma's formula gave
the idea of the existence of the set of purely topological
identities \eqref{eq:1}, and was the starting point of our
investigation.

As an application, we verify
Conjecture (\ref{CONJ}) for splice-quotients.  These singularities were
introduced recently by Neumann and Wahl \cite{NWuj2,NWuj}.
Since their definition  is rather
involved, we omit it.  The interested
reader may consult \cite{NWuj2,NWuj,Opg}.

Splice-quotients include rational and minimal elliptic
singularities (see \cite{Ouac-c}), and also the singularities
which admit a good $\setC^*$ action.  For splice-quotient
singularities and for the canonical \Spinc{} structure, the
conjecture was verified in \cite{NO1,NO2} (for some sporadic
cases, see also \cite{Stev}).  Here, as a byproduct, 
 we get the general case:
\begin{corollary}\label{TH}
  Conjecture (\ref{CONJ}) is true for any splice-quotient singularity.
\end{corollary}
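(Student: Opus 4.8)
The plan is to deduce the corollary as a formal consequence of Theorem~\ref{AN}, applied to the family of all splice-quotient singularities with rational homology sphere link, combined with T.\ Okuma's recursion formula \cite[4.5]{Opg}, which is precisely the additivity identity~\eqref{eq:6} in this setting.

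First I would verify that this family meets the hypothesis of Theorem~\ref{AN}, namely that every non-rational splice-quotient $(X,o)$ has a splitting vertex. Since the link is a rational homology sphere, the resolution graph $\Gamma$ is a tree all of whose vertices carry genus $0$; as $(X,o)$ is not rational, $\Gamma$ cannot be a string (a string being the resolution graph of a cyclic quotient singularity, which is rational), so $\Gamma$ has at least one node $v$. For such a $v$ I would invoke the recursive structure of the Neumann--Wahl construction --- exactly the framework of \cite[4.5]{Opg} --- to see that each $(X_i,o)$, obtained by collapsing $E_i\subset{\widetilde{X}}_i$ and having as dual graph the component $\Gamma_i$ of $\Gamma\setminus v$, is again a splice-quotient; its link is again a rational homology sphere since $\Gamma_i$ is a negative definite subtree of $\Gamma$. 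Hence $v$ is a splitting vertex and the family is admissible for Theorem~\ref{AN}.

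Then I would combine the two ingredients. By Theorem~\ref{AN}, Conjecture~\ref{CONJ} holds for every member of the family if and only if every non-rational member has a splitting vertex at which \eqref{eq:6} holds; Okuma's formula \cite[4.5]{Opg} supplies \eqref{eq:6} at a node of $\Gamma$, which by the previous paragraph is a splitting vertex. (Here one also uses, as is built into the proof of Theorem~\ref{AN}, that Conjecture~\ref{CONJ} holds unconditionally for rational singularities \cite{Line,Graded}, serving as the base of the underlying induction.) This yields Conjecture~\ref{CONJ} for all splice-quotient singularities.

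The only delicate point --- and it requires no new argument --- is the translation between Okuma's recursion and \eqref{eq:6}: one must match his decomposition at a node with the pair consisting of $\Gamma\setminus v$ and the singularities $(X_i,o)$, check that his normalizations of $\cH^{\mathid{pol}}_{\sigma,v}(1)$ and of the cohomological restriction agree with the restriction $R_i$ appearing in \eqref{eq:6}, and observe that, once \eqref{eq:6} is established for $l'\in R$, Remark~\ref{rem:extend} (equivalently Remark~\ref{RIntr}(\ref{RIntr2})) propagates it to all of $R+L_e$. Once these identifications are in place, the corollary follows without further computation.
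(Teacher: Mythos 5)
Your proof is correct and follows essentially the same route as the paper: invoke Theorem~\ref{AN}, verify the family of splice-quotients is closed under collapsing components of $\Gamma\setminus v$ (the paper cites \cite[2.16]{Opg} for this), and supply the additivity~\eqref{eq:6} at a node from Okuma's \cite[Theorem~4.5 and Lemma~4.2(3)]{Opg} together with Lemma~\ref{LPC} to reconcile the periodic constant with $\cH^{\mathid{pol}}_{\sigma,v}(1)$. Your observation that a non-rational singularity's minimal graph is not a string and hence has a node is exactly the implicit step that makes the degree-$\geq 3$ restriction on $v$ harmless.
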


Theorem~(\ref{AN}) and Corollary~(\ref{TH}) are proved in §\ref{Sec:6}.

\section{Preliminaries and notations.}\label{Sec:2}

\subsection{Notations regarding the plumbing representation}\label{NOT}

In the sequel we fix a negative definite tree $\Gamma$ as in §\ref{Sec:1}.  Notice that
$L'$ can be identified with the dual lattice of $L$.
 It is generated by the elements $E^*_w$,
where $(E^*_w, E_u)=\delta_{wu}$ is the Kronecker delta function.
The matrix \(I\) of the inclusion $L\hookrightarrow L'$ in the basis
${\{E_w\}}_w$ of \(L\) and the basis ${\{E^*_w\}}_w$ of \(L'\) is exactly 
the matrix of the intersection form in the basis \({\{E_w\}}_{w}\), namely,
$I_{ww}=b_w$ for all $w$, and for $u \neq w$,  we have $I_{uw}=1$ if
$u$ and $w$ are adjacent, and \(I_{uw} = 0\) otherwise.

By duality, \(L' \cong H_2(\widetilde{X}(\Gamma), \Sigma; \setZ)\), 
and  $L'/L \cong H_1(\Sigma; \setZ)$.  We denote the latter group
by $H$.  Let $\lvert H \rvert $ and $\widehat{H}$ denote its order and Pontrjagin dual
$\Hom(H,\setC^*)$, respectively.  Sometimes we write $d=\det(\Gamma)$ for
$\det(-I)=\lvert H \rvert $.
We define 
\begin{equation}
  \label{eq:30}
  a_{u w} \coloneqq -{\vert H \rvert} \cdot ( E^*_u, E^*_w ) = -{\lvert H \rvert}
  \cdot {(I^{-1})}_{u w}.
\end{equation}
Notice that every $a_{u w}$ is a \emph{positive} integer.

For any $u \in \cV(\Gamma)$ we write $\delta_u$ for the degree of $u$ in $\Gamma$ and we set:
\begin{align}
  \label{eq:16}
  \alpha_u &\coloneqq \sum_{w\in\cV(\Gamma)} (\delta_w - 2) a_{u w}, \\
  \label{eq:18}
  \beta_u &\coloneqq \sum_{w\in\cV(\Gamma)} (\delta_w - 2) a_{u w}^2.
\end{align}

Next we consider some topological/combinatorial invariants
of  $\Sigma$ and $\Gamma$.

\subsection{The Casson\textendash Walker invariant.}
\label{CW}

Let $\lambda(\Sigma)$ denote the Casson\textendash Walker invariant of $\Sigma$, normalized as in
\cite[(4.7)]{Lescop}.  Then
from
\cite{Ratiu} one has:
\begin{equation}\label{eq:15}
  -24 \frac{\lambda(\Sigma)}{\lvert H \rvert } = \sum_{w\in\cV(\Gamma)} b_w + 3 \lvert \cV(\Gamma) \rvert +
  \frac{1}{\lvert H \rvert} \cdot \sum_{w \in \cV(\Gamma)} (\delta_w - 2) a_{w w}.
\end{equation}

\subsection{\Spinc{} structures.}\label{Char}

As it is well-known, see e.g.\ \cite[(2.4.16)]{GS}, the set of \Spinc{}
structures is an \(H^2\) torsor for any manifold admitting a \Spinc{}
structure.  Let \(*\)  denote the action of \(H^2\) on the set of \Spinc{}
structures.
Recall that for any \(h \in H^2\) and \Spinc{} structure \(\sigma\), the action and the
Chern class interact as
\(c_1(h * \sigma) = c_1(\sigma) + 2h\).

For our plumbed manifold \(\widetilde{X}(\Gamma)\), there is a
canonical \Spinc{} structure \(\widetilde{\sigma_{\mathid{can}}}\), whose Chern class
is characterized by (see \cite[2.7\textendash2.9]{nemethi02._seiber_witten}) 
\begin{equation}
  \label{eq:4}
  (c_1(\widetilde{\sigma_{\mathid{can}}}), E_w) = b_w + 2 \quad \text{for all \(w \in \cV(\Gamma)\)}.
\end{equation}
Hence, there is a bijection between \(L'\) and the set of \Spinc{}
structures of \(\widetilde{X}(\Gamma)\) which assigns \(l' \in L'\) to \(l' *
\widetilde{\sigma_{\mathid{can}}}\).

Similarly, the set of \Spinc{} structures of the boundary \(\Sigma\) is an \(H\) torsor.
The restriction of \Spinc{} structures commute with the action via the
canonical map \(L' \to H\).
Since this homomorphism is surjective, every \Spinc{} structure of \(\Sigma\) extends
to  \(\widetilde{X}(\Gamma)\).

By definition,  the \emph{canonical} \Spinc{} structure
$\sigma_{\mathid{can}}$ on $\Sigma$ is the restriction of the canonical \Spinc{} structure
$\widetilde{\sigma_{\mathid{can}}}$ of \(\widetilde{X}(\Gamma)\).

\subsection{The Reidemeister\textendash Turaev torsion and the Seiberg\textendash Witten
  invariant.}
\label{RT}

  For any $\sigma \in
\Spinc(\Sigma)$, we consider the \emph{Reidemeister\textendash Turaev torsion}
 $\et_\sigma = \sum_{h\in H} \et_{\sigma}(h) h \in {\setQ}[H]$ from
\cite{Tu5}.  We will write $\et_\sigma(\Sigma)$ for
$\et_\sigma(0)$.  Then, by \cite[Theorem~2.4]{Nico5}, the \emph{Seiberg\textendash Witten
invariant} $\ssw_\sigma(\Sigma)$ of $\Sigma$ associated with $\sigma \in
\Spinc(\Sigma)$ equals (note our sign convention):
\begin{equation}\label{RT1}
  \ssw_\sigma(\Sigma) = \frac{\lambda(\Sigma)}{\lvert H \rvert} - \et_{\sigma}(\Sigma).
\end{equation}
By \cite[3.8, 5.7]
{nemethi02._seiber_witten}, $\et_{\sigma}(\Sigma)$ can be determined from the
graph $\Gamma$ via Fourier transform as follows.

First, for any $\rho \in \widehat{H}$ and fixed vertex $u\in\cV(\Gamma)$, we define
a rational function in $t$:
\begin{equation}\label{RT2}
  P_{\rho,u}(t) \coloneqq \prod_{w\in\cV(\Gamma)}  {(1-\rho([E^*_w]) t^{a_{w u}})}^{\delta_w-2},
\end{equation}
 where $[E^*_w]$
is the class of $E^*_w$ in $H = L'/L$.  Take also $h_\sigma \in H$
such that $h_\sigma * \sigma_{\mathid{can}} = \sigma$.
Next, for any non-trivial character $\rho \in \widehat{H} \setminus
\{1\}$, find a vertex $u_\rho \in \cV(\Gamma)$ such that either
$\rho([E^*_{u_\rho}])\neq1$, or $u_\rho$  has an adjacent vertex 
$u$ with $\rho([E^*_{u}])\neq1$.  Then
the Fourier transform of $\et$ is
\begin{equation}\label{RT3}
  \widehat{\et_{\sigma}}(\rho) = {\rho(h_{\sigma})}^{-1} 
\cdot \lim_{t\to 1} P_{\rho,u_{\rho}}(t) \quad (\rho \neq 1).
\end{equation}
In the sequel, this limit will be denoted simply by
$P_{\rho,u_{\rho}}(1)$.
Recall that $\widehat{\et_{\sigma}}(1) = 0$.  Therefore:
\begin{equation}\label{eq:11}
  \et_\sigma(\Sigma) = \frac{1}{\lvert H \rvert} \cdot \sum_{\rho \in \widehat{H} \setminus \{1\}}
  \widehat{\et_{\sigma}}(\rho).
\end{equation}
 If $\lvert H \rvert = 1$
then $\et_\sigma(\Sigma)=0$ for the unique \Spinc{} structure $\sigma$, hence
$\ssw_\sigma(\Sigma)=\lambda(\Sigma)$.

\subsection{The rational function $\cH_{\sigma, u}(t)$}
\label{Hilbert}

For any $\sigma \in \Spinc(\Sigma)$ and  $u \in \cV(\Gamma)$ one defines 
\begin{equation*}
\cH_{\sigma, u}(t) \coloneqq \frac{1}{\lvert H \rvert} \cdot \sum_{\rho \in
\widehat{H}} {\rho(h_{\sigma})}^{-1} \cdot P_{\rho,u}(t),
\quad \text{where \(h_\sigma*\sigma_{\mathid{can}}=\sigma\)}.
\end{equation*}

\subsection{Invariants associated with the distinguished vertex $v$.}
\label{sec:invar-dist-vertex}

Recall that for a fixed vertex \(v\) of \(\Gamma\), the components of \(\Gamma
\setminus v\) are the graphs \(\Gamma_i\).
Let $v_i$ denote the unique vertex of $\Gamma_i$ which is adjacent to $v$
in $\Gamma$.

We indicate by a subscript \(i\) when we use invariants of \(\Gamma_i\) instead of
\(\Gamma\).
For example, we write \(d_i = \det \Gamma_i\), \(H_i = H_1(\Sigma_i; \setZ)\), \(L_i\),
\(a_{u w, i}\) and so on.

We regard $L_i$ as a sublattice of $L$ via the natural inclusion
\(H_2(\widetilde{X}(\Gamma_i); \setZ) \hookrightarrow H_2(\widetilde{X}(\Gamma); \setZ)\).
Hence, for any $w \in \cV(\Gamma_i)$, we have $E_{w, i} = E_w$.

\begin{definition}
  \label{MT}\mbox{}
  \begin{enumerate}
  \item\label{item:2} Consider the setup of §\ref{Sec:1}.  For a \Spinc{}
    structure \(\sigma\) of \(\Sigma\), its \emph{restriction} \(\sigma_i\) to \(\Sigma_i\) is
    defined to be the
    restriction of any extension \(\widetilde{\sigma} \in
    \Spinc(\widetilde{X}(\Gamma))\) of \(\sigma\) satisfying~\eqref{eq:2}
    to the submanifold \(\Sigma_i\).
    In other words,
    $\widetilde{\sigma}=l'*\widetilde{\sigma_{\mathid{can}}}$ for some $l'\in
    L'$ with $[l']*\sigma_{\mathid{can}}=\sigma$ and 
    \begin{equation}
      \label{eq:5}
      -1 < \left(l', E^*_v \right) \leq 0.
    \end{equation}
  \item\label{item:3} 
The  \emph{restriction} $R_i\colon L'\to L_i'$ is the homomorphism induced by
the inclusion  $\widetilde{X}(\Gamma_i)\hookrightarrow \widetilde{X}(\Gamma)$  on second cohomology groups.
In other words,
$R_i(E^*_w) = E^*_{w,i}$ if $w \in \cV(\Gamma_i)$, and $R_i(E^*_w) = 0$ otherwise.
Therefore, for
$l' = \sum_{w} r_w E_w = \sum_{w} s_w E^*_w$, one has 
\begin{equation}\label{MT2}
  R_i(l') = \sum_{w \in \cV(\Gamma_i)} s_w E^*_{w,i} = r_vE^*_{v_i,i} +
  \sum_{w \in \cV(\Gamma_i)} r_w E_w.
\end{equation}
  \end{enumerate}
\end{definition}
Since $R_i(l')$ is characterized by $( R_i(l'), E_{w}) = 
(l', E_{w})$ for all $w \in \cV(\Gamma_i)$,  the last equality in \eqref{MT2} follows.
One can verify that $\sigma_i \in \Spinc(\Sigma_i)$ 
is independent of the choice of \(\widetilde{\sigma}\) thanks to \eqref{eq:5}.
Since the canonical \Spinc{} structure of \(\widetilde{X}(\Gamma)\) restricts 
to the canonical \Spinc{} structure of \(\widetilde{X}(\Gamma_i)\), the 
restriction of the canonical \Spinc{} structure of \(\Sigma\) to \(\Sigma_i\) 
is the canonical one.  Moreover,
the restriction of \(\sigma = [l'] * \sigma_{\mathid{can}}\) is \([R_i(l')] * 
\sigma_{\mathid{can},i}\)
provided that \(r_v \coloneqq (l',E_v^*) \in (-1,0]\).
The number \(r_v\) depends only on \(\sigma\) and not on the 
choice of \(l'\).

\subsection{Pseudo-characters.}\label{PCH}

We will need to extend the expression \eqref{RT2} for 
an arbitrary map $\psi\colon \cV(\Gamma) \to \setC^*$  by 
\begin{equation}\label{eq:99}
  P_{\psi,v}(t) \coloneqq \prod_{w\in\cV(\Gamma)} {(1 - \psi(w) t^{a_{w v}})}^{\delta_w-2}.
\end{equation}
For such a map $\psi$ and vertex $w \in \cV(\Gamma)$, we define
\begin{equation*}
  \df_w(\psi) \coloneqq {\psi(w)}^{b_w} \prod_{j=1}^{\delta_w} \psi(w(j)),
\end{equation*}
where ${\{w(j)\}}_j$ are the vertices of $\Gamma$ adjacent to $w$.
The map $\psi$ is
called a \emph{pseudo-character} (associated with the vertex $v$)
if $\df_w(\psi)=1$ for all
$w \neq v$.  Their collection will be denoted by $\widetilde{H}$.  We set
$\df(\psi) \coloneqq \df_v(\psi)$.  Notice that pseudo-characters $\psi$
with $\df(\psi)=1$ are exactly the characters of $H$ via the
correspondence $\psi(w)=\psi([E^*_w])$.
In fact,  $\psi$ can be regarded as a character on $L'$ (which does not
necessarily descend to $H$): any $\psi \in \widetilde{H}$ gives a
morphism  $L' \to \setC^*$ defined by
\begin{equation*}
  \psi \left( \sum_w m_w E^*_w \right) \coloneqq \prod_w {\psi(w)}^{m_w}.
\end{equation*}

\subsection{Notations regarding rational functions.}\label{FN}

\begin{enumerate}
\item\label{FN1}
We write any rational function $R$ as $R^{\mathid{pol}} + R^{<0}$, where
$R^{\mathid{pol}}$ is a polynomial and $R^{<0}$ is a rational function with
negative degree.  For $R$ without pole at $0$ 
we shall refine it further: one writes $R^{<0}$
in a unique way as a finite sum
\begin{align*}
  R^{<0}(t) &= \sum_{\alpha\neq0}(\Laurentpartat{\alpha} R)(t),\\
  \text{where}\quad
  (\Laurentpartat{\alpha} R)(t) &= \sum_{k>0}\frac{a_{\alpha,k}}
{{(1 - \alpha t)}^k},
  \quad  (\alpha \in \setC^*,  a_{\alpha,k} \in \setC).
\end{align*}
\item\label{FN2}
For any rational function  $R(t)$ with Laurent expansion
$\sum_{k\geq k_0} a_k {(t - 1)}^k$ at $t=1$, we write $\constanttermatone{R(t)}$ for the
coefficient $a_0$.  Notice that if \(1\) is not a pole of $R$ then
$\constanttermatone{R(t)} = R(1)$.
\end{enumerate}

The next identities  are elementary and their  proofs
are left to the reader.
\begin{lemma}\label{ELEM}
For any  $0 \leq q < d$ one has
\begin{align}
  \label{ELEM1}
  \frac{1}{d} \sum_{\alpha^d=\epsilon} \frac{\alpha^{-q}}{1-\alpha t} &= \frac{t^q}{1-\epsilon t^d}, \\
  \label{eq:17}
  \constanttermatone{\left( \frac{1}{d} \sum_{\alpha^d=1} \frac{\alpha^{-q}}{1 - \alpha t}
    \right)} &= \frac{d-1-2q}{2d}, \\
  \label{ELEM2}
  \frac{1}{d} \sum_{\alpha^d=1} \frac{\alpha^{-q}}{{(1-\alpha t)}^2} &=
  \frac{dt^q}{{(1- t^d)}^2} - \frac{(d-q-1)t^q}{1-t^d}, \\
  \label{ELEM3}
  \constanttermatone{\left( \frac{1}{d} \sum_{\alpha^d=1} \frac{\alpha^{-q}}{{(1 - \alpha t)}^2}
    \right)} &= - \frac{(d-1)(d-5)}{12d} - \frac{q^2+2q-qd}{2d}.
\end{align}
\end{lemma}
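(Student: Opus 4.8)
The plan is to establish \eqref{ELEM1} directly and then deduce the other three identities from it. Expanding $\tfrac{1}{1-\alpha t}=\sum_{n\ge0}\alpha^n t^n$ and interchanging the two (absolutely convergent, for small $\lvert t\rvert$) summations, the left-hand side of \eqref{ELEM1} becomes $\tfrac1d\sum_{n\ge0}t^n\bigl(\sum_{\alpha^d=\epsilon}\alpha^{\,n-q}\bigr)$. Fixing one $\alpha_0$ with $\alpha_0^d=\epsilon$, the $d$-th roots of $\epsilon$ are precisely the products $\alpha_0\zeta$ with $\zeta^d=1$, so $\sum_{\alpha^d=\epsilon}\alpha^{\,n-q}=\alpha_0^{\,n-q}\sum_{\zeta^d=1}\zeta^{\,n-q}$, which vanishes unless $d\mid n-q$ and equals $d\,\alpha_0^{\,n-q}$ otherwise. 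Because $0\le q<d$, the contributing indices are exactly $n=q+kd$ with $k\ge0$, and there $\alpha_0^{\,n-q}=\alpha_0^{\,kd}=\epsilon^{k}$; hence the surviving terms form the geometric series $\sum_{k\ge0}\epsilon^k t^{q+kd}=t^q/(1-\epsilon t^d)$, which is \eqref{ELEM1}.

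Next, \eqref{eq:17} computes $\constanttermatone{\bigl(t^q/(1-t^d)\bigr)}$, where $t^q/(1-t^d)$ is the $\epsilon=1$ instance of \eqref{ELEM1}. Writing $t=1+s$ and using $t^q=1+qs+O(s^2)$ together with $1-t^d=-ds\bigl(1+\tfrac{d-1}{2}s+O(s^2)\bigr)$, one gets $t^q/(1-t^d)=-\tfrac{1}{ds}-\tfrac{1}{d}\bigl(q-\tfrac{d-1}{2}\bigr)+O(s)$, whose $s^0$-coefficient is $(d-1-2q)/(2d)$.

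For \eqref{ELEM2} I would apply, termwise inside $\tfrac1d\sum_{\alpha^d=1}\alpha^{-q}(\,\cdot\,)$, the elementary identity $\tfrac1{(1-\alpha t)^2}=\tfrac1{1-\alpha t}+t\,\tfrac{d}{dt}\tfrac1{1-\alpha t}$ (equivalently, expand $\tfrac1{(1-\alpha t)^2}=\sum_{n\ge0}(n+1)\alpha^n t^n$ and rerun the root-of-unity argument above). By \eqref{ELEM1} at $\epsilon=1$ this rewrites the left-hand side as $\tfrac{t^q}{1-t^d}+t\tfrac{d}{dt}\tfrac{t^q}{1-t^d}$; carrying out the differentiation and using $t^{q+d}=t^q(t^d-1)+t^q$ to lower the degree produces $\tfrac{dt^q}{(1-t^d)^2}-\tfrac{(d-q-1)t^q}{1-t^d}$. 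Finally \eqref{ELEM3} is handled exactly like \eqref{eq:17}: substitute $t=1+s$ into the right-hand side of \eqref{ELEM2} and read off the $s^0$-coefficient, now expanding $t^q$, $(1-t^d)^{-1}$ and $(1-t^d)^{-2}$ through order $s^2$; the convenient intermediate fact is $(1-t^d)^{-1}=-\tfrac1{ds}+\tfrac{d-1}{2d}-\tfrac{d^2-1}{12d}s+O(s^2)$. Once the $s^{-2}$ and $s^{-1}$ poles cancel, the two remaining contributions combine (after multiplying by $12d$) to $-(d-1)(d-5)-6(q^2+2q-qd)$, which is \eqref{ELEM3}.

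There is no genuine difficulty anywhere; the whole lemma is bookkeeping. The most delicate computation is the last one: \eqref{ELEM3} requires a second-order Laurent expansion at $t=1$ and the explicit cancellation of the double and simple poles, where sign and binomial-coefficient slips are easy to make. It is therefore worth recording the Laurent expansions of $1-t^d$, of $(1-t^d)^{-1}$, and of $(1-t^d)^{-2}$ once and for all before substituting them into the right-hand side of \eqref{ELEM2}.
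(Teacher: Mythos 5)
The paper leaves these four identities to the reader, so there is no official proof to compare against; your write-up is therefore a genuine contribution rather than a reconstruction. That said, the reasoning is correct throughout. Equation~\eqref{ELEM1} follows exactly as you say from the root-of-unity orthogonality $\sum_{\zeta^d=1}\zeta^m = d\,[\,d\mid m\,]$, with the sole point requiring $0\le q<d$ being the identification $n=q+kd$, $k\ge0$. Your use of the operator identity $\tfrac1{(1-\alpha t)^2}=\tfrac1{1-\alpha t}+t\,\tfrac{d}{dt}\tfrac1{1-\alpha t}$ to bootstrap \eqref{ELEM2} from \eqref{ELEM1} is a slick alternative to re-running the coefficient computation with the weight $(n+1)$, and the reduction $t^{q+d}=t^q(t^d-1)+t^q$ indeed lands on the stated form. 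For \eqref{ELEM3} I verified the constant term directly: the two contributions you describe sum to
\begin{equation*}
\frac{(d-1)(5d-1)}{12d}-\frac{q(d-1)}{d}+\frac{q(q-1)}{2d}-\frac{(d-q-1)(d-1-2q)}{2d}
=-\frac{(d-1)(d-5)}{12d}-\frac{q^2+2q-qd}{2d},
\end{equation*}
as claimed, and your intermediate expansion $(1-t^d)^{-1}=-\tfrac{1}{ds}+\tfrac{d-1}{2d}-\tfrac{d^2-1}{12d}s+O(s^2)$ is the right bookkeeping to have on hand. No gaps.
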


\section{Identities about determinants and restrictions.}\label{Sec:3}

Our calculation will extensively use the following general properties of
graph-determinants.

\begin{lemma}\label{Fact}
  \begin{enumerate}[(a)]
  \item\label{Fact0} Consider two vertices \(u,w \in \cV(\Gamma)\) of \(\Gamma\).  Let \(\Gamma \setminus
    \overline{uw}\) be the subgraph of \(\Gamma\) obtained by deleting the
    path connecting \(u\) and \(w\) (including \(u\) and \(w\)).  Then
    \begin{equation}\label{eq:100}
      a_{uw} = \det (\Gamma \setminus \overline{uw}).
    \end{equation}
  \item\label{Facta} For every component \(\Gamma_i\) of \(\Gamma \setminus v\) and vertex \(w\)
    of \(\Gamma_i\)
    \begin{equation}
      \label{eq:31}
      a_{v w} = a_{v_i w, i} \cdot \det (\Gamma \setminus v \setminus \Gamma_i).
    \end{equation}

  \item\label{Factb} For any \(u\in\cV(\Gamma)\) one has
    \begin{equation}\label{eq:200}
      a_{u u} \cdot \prod_{w\in\cV(\Gamma)} a_{u w}^{\delta_w - 2} = 1.
    \end{equation}
  \item\label{Factc} Consider a decomposition of \(\Gamma\) as follows:

\begin{center}
  \begin{picture}(0,0)%
\includegraphics{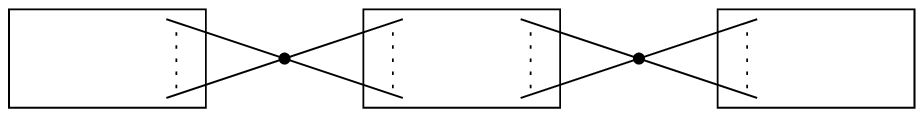}%
\end{picture}%
\setlength{\unitlength}{4144sp}%
\begingroup\makeatletter\ifx\SetFigFontNFSS\undefined%
\gdef\SetFigFontNFSS#1#2#3#4#5{%
  \reset@font\fontsize{#1}{#2pt}%
  \fontfamily{#3}\fontseries{#4}\fontshape{#5}%
  \selectfont}%
\fi\endgroup%
\begin{picture}(4842,508)(211,-2672)
\put(1261,-2446){\makebox(0,0)[lb]{\smash{{\SetFigFontNFSS{8}{9.6}{\familydefault}{\mddefault}{\updefault}{\color[rgb]{0,0,0}$G'$}%
}}}}
\put(2926,-2446){\makebox(0,0)[lb]{\smash{{\SetFigFontNFSS{8}{9.6}{\familydefault}{\mddefault}{\updefault}{\color[rgb]{0,0,0}$G$}%
}}}}
\put(4546,-2446){\makebox(0,0)[lb]{\smash{{\SetFigFontNFSS{8}{9.6}{\familydefault}{\mddefault}{\updefault}{\color[rgb]{0,0,0}$G''$}%
}}}}
\put(226,-2446){\makebox(0,0)[lb]{\smash{{\SetFigFontNFSS{8}{9.6}{\familydefault}{\mddefault}{\updefault}{\color[rgb]{0,0,0}$\Gamma$:}%
}}}}
\put(3736,-2626){\makebox(0,0)[lb]{\smash{{\SetFigFontNFSS{8}{9.6}{\familydefault}{\mddefault}{\updefault}{\color[rgb]{0,0,0}$u$}%
}}}}
\put(2116,-2626){\makebox(0,0)[lb]{\smash{{\SetFigFontNFSS{8}{9.6}{\familydefault}{\mddefault}{\updefault}{\color[rgb]{0,0,0}$v$}%
}}}}
\end{picture}%

\end{center}
Above, the subgraphs \(G'\), \(G\) and \(G''\) can be empty.
If \(G\) is empty then \(v\) and \(u\) is connected by a single edge.
The vertices \(v\) and \(u\) are not allowed to be the same.

Then  (with the convention  $\det(\emptyset)=1$), one  has:
\begin{equation}
  \label{eq:28}
  \begin{split}
    \det(\Gamma) \cdot \det(G) ={}& \det(G \cup G' \cup v) \cdot \det(G \cup G''\cup u) \\
    &- \det(G') \cdot \det(G'') \cdot {\det (G \setminus \overline{uv})}^2.
  \end{split}
\end{equation}
(Here $G \cup G'\cup v$ and $G \cup G''\cup u$ also contain the edges adjacent to $v$ and $u$,
respectively.)
\end{enumerate}
\end{lemma}

\begin{proof}
Equation~\eqref{eq:100} is proved in \cite[(20.2)]{MR817982}.
Equation~\eqref{eq:31} follows from \eqref{eq:100} and by noting that the
determinant of graphs is multiplicative over disjoint union of graphs.

Statement (\ref{Factb}) immediately follows from
\eqref{eq:100} and \eqref{eq:200}
by an easy induction on the number of vertices of the graph.

The claim (\ref{Factc}) is an exercise on graph determinants.
For example,
let us consider the components of \(G\),
which are connected only to \(v\) and not to \(u\).
By moving these components from \(G\) to \(G'\),
we reduce to the case
that \(v\) and \(G\) are connected by a single edge.
Similarly, we reduce to the case when
\(u\) and \(G\) are also connected by a single edge.
Then \eqref{eq:28} follows from \cite[Lemma~12.7]{NWuj2}.
\end{proof}

\begin{corollary}\label{Coruj}
  Using the decomposition of (\ref{Fact})(\ref{Factc}), for any \(S \subseteq
  \cV(G'')\), one has:
  \begin{equation}
    \label{eq:33}
    \begin{split}
     {\left( \prod_{w \notin S} a_{w v}^{\delta_w - 2}\right)}^{-1} &= 
     \det G' \cdot \det (G \cup G'' \cup u) \\
     &\cdot {(\det G' \cdot \det (G \setminus \overline{u v}))}^{\sum_{w \in S} \delta_w - 2} \cdot
     \prod_{w \in S} {\det (G'' \setminus \overline{u w})}^{\delta_w - 2}.
    \end{split}
  \end{equation}
The subgraph \(G\) is  allowed to be empty.
Furthermore, \(v\) and \(u\) are allowed to be the same, and in this case \(G\) is
empty and one should write \(G''\) instead of \(G \cup G'' \cup u\) in the 
formula.
  In particular,
  \begin{equation}
    \label{eq:34}
    \prod_{w \in \cV(\Gamma) \setminus \cV(\Gamma_i)} a_{w v}^{\delta_w-2} = \frac{1}{d_i}.
  \end{equation}
\end{corollary}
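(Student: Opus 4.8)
The plan is to reduce \eqref{eq:33} to the two graph-determinant identities \eqref{eq:100} and \eqref{eq:200} of Lemma~(\ref{Fact}), together with the multiplicativity of graph determinants over disjoint unions. First I would apply \eqref{eq:200} with $u = v$; since $I^{-1}$ is symmetric one has $a_{wv} = a_{vw}$, so this reads ${\bigl( \prod_{w \in \cV(\Gamma)} a_{wv}^{\delta_w-2} \bigr)}^{-1} = a_{vv}$, and hence
\[
  {\Bigl( \prod_{w \notin S} a_{wv}^{\delta_w-2} \Bigr)}^{-1}
  = a_{vv} \cdot \prod_{w \in S} a_{wv}^{\delta_w-2}.
\]
It therefore suffices to evaluate $a_{vv}$ and each $a_{wv}$ with $w \in S \subseteq \cV(G'')$ by \eqref{eq:100}, i.e.\ to read off the connected components that survive after deleting the relevant path from $\Gamma$.

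By \eqref{eq:100}, $a_{vv} = \det(\Gamma \setminus v)$, and by construction of the decomposition deleting $v$ splits $\Gamma$ into $G'$ and $G \cup G'' \cup u$, so $a_{vv} = \det G' \cdot \det(G \cup G'' \cup u)$. For $w \in \cV(G'')$ the path $\overline{wv}$ is the path from $w$ to $u$ inside $G'' \cup u$ followed by the path from $u$ to $v$ through $G$; deleting it from $\Gamma$ leaves exactly the three pieces $G'$, $G \setminus \overline{uv}$ and $G'' \setminus \overline{uw}$, whence $a_{wv} = \det G' \cdot \det(G \setminus \overline{uv}) \cdot \det(G'' \setminus \overline{uw})$ by \eqref{eq:100} and multiplicativity. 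Substituting these two expressions into the displayed identity and collecting the factors $\det G'$ and $\det(G \setminus \overline{uv})$, which together occur with total exponent $\sum_{w \in S}(\delta_w - 2)$, yields \eqref{eq:33}. The case $u = v$ is the same computation, with $G$ empty, $\det(G \setminus \overline{uv}) = \det \emptyset = 1$ and $G \cup G'' \cup u$ replaced by $G''$.

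For \eqref{eq:34} I would specialize \eqref{eq:33} to $u = v$, $G = \emptyset$, $G'' = \Gamma_i$, $G' = (\Gamma \setminus v) \setminus \Gamma_i$ and $S = \cV(\Gamma_i)$; then $\prod_{w \notin S}$ runs exactly over $\cV(\Gamma) \setminus \cV(\Gamma_i)$, and the right-hand side becomes
\[
  \det G' \cdot \det \Gamma_i \cdot {(\det G')}^{\sum_{w \in \cV(\Gamma_i)}(\delta_w-2)}
  \cdot \prod_{w \in \cV(\Gamma_i)} {\det(\Gamma_i \setminus \overline{v_i w})}^{\delta_w - 2}.
\]
Since $\Gamma_i$ is a tree and the only edge of $\Gamma$ joining $\cV(\Gamma_i)$ to the rest of $\Gamma$ is $v_i v$, one has $\sum_{w \in \cV(\Gamma_i)} \delta_w = 2\lvert \cV(\Gamma_i) \rvert - 1$, so $\sum_{w \in \cV(\Gamma_i)}(\delta_w - 2) = -1$ and the two powers of $\det G'$ cancel. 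Finally, writing $\delta_w = \delta_{w,i}$ for $w \neq v_i$ and $\delta_{v_i} = \delta_{v_i,i} + 1$, and using \eqref{eq:100} for $\Gamma_i$ (so $\det(\Gamma_i \setminus \overline{v_i w}) = a_{v_i w,i}$), the last product equals $a_{v_i v_i,i} \cdot \prod_{w \in \cV(\Gamma_i)} a_{v_i w,i}^{\delta_{w,i} - 2}$, which is $1$ by \eqref{eq:200} applied to $\Gamma_i$ with distinguished vertex $v_i$. Hence the right-hand side is $\det \Gamma_i = d_i$, which gives \eqref{eq:34}.

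The only genuinely delicate step is the second one: one must verify that, for $w \in \cV(G'')$, removing the path $\overline{wv}$ from $\Gamma$ disconnects it precisely into $G' \sqcup (G \setminus \overline{uv}) \sqcup (G'' \setminus \overline{uw})$ in the chosen decomposition (and likewise for $\overline{vv}$). Everything else is bookkeeping with the exponents $\delta_w - 2$; the one point to watch is that $\delta_w$ always denotes the degree in the \emph{ambient} graph $\Gamma$, which produces the extra $1$ at the vertex $v_i$ and is exactly what makes the telescoping against \eqref{eq:200} on $\Gamma_i$ work.
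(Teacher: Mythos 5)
Your proposal is correct and follows essentially the same route as the paper: rewrite the left-hand side as $a_{vv}\prod_{w\in S}a_{wv}^{\delta_w-2}$ via \eqref{eq:200} and then express $a_{vv}$ and each $a_{wv}$ with $w\in S$ by \eqref{eq:100} using the decomposition of $\Gamma$. The only cosmetic difference is in the final step of \eqref{eq:34}: you apply \eqref{eq:200} to $\Gamma_i$ at $v_i$ after adjusting $\delta_{v_i}$ by one, while the paper applies it directly to the graph $G''\cup u=\Gamma_i\cup v$; both give the same cancellation.
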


\begin{proof}
The left hand side of the first equation, by \eqref{eq:200}, 
is   $a_{v v}  \prod_{w \in S} a_{w v}^{\delta_w - 2}$, which equals the right hand
side by
\eqref{eq:100}.
The second equation follows from the first one by the choices \(u
\coloneqq v\), \(G'' \coloneqq \Gamma_i\), \(S \coloneqq \cV(G'')\)
and \(G' = \bigcup_{j \neq i} \Gamma_j\).
Note that \(\sum_{w \in S} (\delta_w - 2) = -1\) 
and \(\prod_{w \in S} {\det (G'' \setminus
  \overline{u w})}^{\delta_w - 2} = 1\) 
(the latter is \eqref{eq:200} applied to \(G'' \cup u\)).
\end{proof}

\begin{lemma}\label{DetRel}
   For any \(x \in L'\) and its restrictions \(x_i \coloneqq R_i(x)\)
    (see~\ref{MT}(\ref{item:3}))
  \begin{align}
  \label{eq:7}
  x - \sum_i x_i &= - \frac{d (x, E^*_v)}{a_{v v}} E^*_v \\
  \label{eq:8}
  x^2 - \sum_i x_i^2 &= - \frac{d {(x, E^*_v)}^2}{a_{v v}}.
  \end{align}
\end{lemma}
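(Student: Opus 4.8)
The plan is to work entirely inside the rational vector space $L'\otimes\setQ=L\otimes\setQ$ with its (extended) intersection form, and to exploit the orthogonal decomposition of $\cV(\Gamma)$ coming from deleting $v$. First I would record the two structural facts that drive everything. By the characterization of $R_i$ recalled just after \eqref{MT2}, the vector $x_i=R_i(x)$ lies in the subspace $V_i\coloneqq\operatorname{span}_\setQ\{E_w:w\in\cV(\Gamma_i)\}$ and satisfies $(x_i,E_w)=(x,E_w)$ for every $w\in\cV(\Gamma_i)$. On the other hand, since $\Gamma_i$ and $\Gamma_j$ ($i\neq j$) are distinct connected components of $\Gamma\setminus v$, every path in $\Gamma$ joining a vertex of $\Gamma_i$ to a vertex of $\Gamma_j$ must pass through $v$; in particular such vertices are never adjacent, so $I_{uw}=(E_u,E_w)=0$ and hence $V_i\perp V_j$. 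Together with $(E^*_v,E_w)=\delta_{vw}=0$ for $w\neq v$, these give the three orthogonality relations I will use: $(x_i,x_j)=0$ for $i\neq j$; $(x_i,E^*_v)=0$ for every $i$; and $(x,x_i)=(x_i,x_i)$ for every $i$ (the last since $x_i\in V_i$ and $(x,E_w)=(x_i,E_w)$ for $w\in\cV(\Gamma_i)$, so $(x,x_i)=\sum_{w}(x_i)_w(x,E_w)=\sum_w(x_i)_w(x_i,E_w)=(x_i,x_i)$).

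Next I would prove \eqref{eq:7}. Set $y\coloneqq x-\sum_i x_i$. For a vertex $w\in\cV(\Gamma_j)$ one has $(y,E_w)=(x,E_w)-\sum_i(x_i,E_w)=(x,E_w)-(x_j,E_w)=0$, using the characterization of $x_j$ for the index $i=j$ and $V_i\perp E_w$ for $i\neq j$; therefore $(y,E_w)=0$ for all $w\neq v$. Expanding $y$ in the basis $\{E^*_w\}$ and using $(E^*_u,E_w)=\delta_{uw}$ gives $y=\sum_w(y,E_w)E^*_w=(y,E_v)E^*_v$, so $y=c\,E^*_v$ for a scalar $c$. To pin down $c$, pair with $E^*_v$: by \eqref{eq:30} and $|H|=d$ we have $(E^*_v,E^*_v)=-a_{vv}/d$, so $(y,E^*_v)=-c\,a_{vv}/d$; on the other hand $(y,E^*_v)=(x,E^*_v)-\sum_i(x_i,E^*_v)=(x,E^*_v)$ by the second orthogonality relation. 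Hence $c=-d(x,E^*_v)/a_{vv}$, which is exactly \eqref{eq:7}.

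Finally, \eqref{eq:8} follows by squaring. One route: substitute $x=\sum_i x_i+c\,E^*_v$ and expand $x^2$; the cross terms $(x_i,x_j)$ ($i\neq j$) and $(x_i,E^*_v)$ all vanish, leaving $x^2=\sum_i x_i^2+c^2(E^*_v)^2=\sum_i x_i^2-\tfrac{a_{vv}}{d}\cdot\tfrac{d^2(x,E^*_v)^2}{a_{vv}^2}=\sum_i x_i^2-\tfrac{d(x,E^*_v)^2}{a_{vv}}$. Alternatively, pair \eqref{eq:7} with $x$ and use $(x,x_i)=x_i^2$; both give $x^2-\sum_i x_i^2=-d(x,E^*_v)^2/a_{vv}$.

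I do not expect a real obstacle here: the content is linear algebra over $L\otimes\setQ$ together with a single elementary fact about the graph $\Gamma$. The one place that needs care is checking that the error term $y=x-\sum_i x_i$ is orthogonal to every $E_w$ with $w\neq v$ — this is exactly where the non-adjacency of distinct components of $\Gamma\setminus v$ enters — and then the sign/normalization bookkeeping in translating $(E^*_v,E^*_v)$ into $-a_{vv}/d$ via \eqref{eq:30}.
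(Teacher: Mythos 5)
Your proof is correct, and it takes essentially the same route as the paper's: you show that the difference $y\coloneqq x-\sum_i x_i$ annihilates $E_w$ for $w\neq v$ and then determine its $E^*_v$-component by pairing with $E^*_v$ (using $(E^*_v,E^*_v)=-a_{vv}/d$ from \eqref{eq:30}), which is exactly the paper's check of both sides of \eqref{eq:7} on the basis $\{E_w:w\neq v\}\cup\{E^*_v\}$. For \eqref{eq:8}, your alternative route (pairing \eqref{eq:7} with $x$ and using $(x,x_i)=x_i^2$) is precisely the paper's; your primary expansion of $x^2$ via the orthogonality relations is a harmless elaboration of the same computation.
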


\begin{proof}
The main idea of the proof of \eqref{eq:7} is that
since the scalar product is definite,
it is enough to verify that
the scalar product with either side of the equation agree,
at least on a basis of \(L'\) over \(\setQ\).
We choose the basis consisting of the \(E_w\) for \(w \neq v\) and \(E^*_v\).
It is easy to verify that
the scalar product of either side of \eqref{eq:7} with \(E_w\) is
\(0\) for \(w \neq v\),
and the scalar product of either side with \(E_v^*\) is \((x, E^*_v)\).

Equation~\eqref{eq:8} is the scalar product of \eqref{eq:7} with \(x\).
Here we use the identity \((x, x_i) = x_i^2\), which is true,
since \(x_i\) is the restriction of \(x\).
\end{proof}

\section{Additivity formulas. Proof of Theorem (\ref{MTIntr}).}\label{Sec:4}

We break the main identity \eqref{eq:2} into the additivity formulas
\eqref{AK} and \eqref{eq:36}, and we also break the latter one into \eqref{AL}
and \eqref{AT}.

\begin{proposition}\label{prop:2}
  With the notations of §\ref{Sec:2}, (especially of (\ref{MT})), one has:
  \begin{align}
    \label{AK}
    {c_1(\widetilde{\sigma})}^2+ \lvert \cV(\Gamma) \rvert - \sum_i \left( {c_1(\widetilde{\sigma}_i)}^2+
      \lvert \cV(\Gamma_i) \rvert \right) &= 1
    -\frac{{(\alpha_v + d + 2 d r_v)}^2}{d a_{v v}}, \\
    \label{eq:36}
    \ssw_\sigma(\Sigma) - \sum_i \ssw_{\sigma_i}(\Sigma_i) &= - \cH^{\mathid{pol}}_{\sigma,v}(1)
    - \frac{1}{8} + \frac{{(\alpha_v + d + 2 d r_v)}^2}{8 d a_{v v}}, \\
    \label{AL}
    24 \frac{\lambda}{\lvert H \rvert } - \sum_i 24 \frac{\lambda_i}{\lvert
      H_i \rvert} &= -3 + \frac{d^2 - \beta_v}{d a_{v v}}, \\
    \label{AT}
    \et_\sigma(\Sigma) - \sum_i \et_{\sigma_i}(\Sigma_i) &= \cH^{\mathid{pol}}_{\sigma,v}(1) +
    \frac{d^2 - \beta_v}{24 d a_{v v}} -
    \frac{{(\alpha_v + d + 2 d r_v)}^2}{8 d a_{v v}}.
  \end{align}
\end{proposition}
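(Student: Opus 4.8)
The plan is to prove the four additivity formulas of Proposition \ref{prop:2} essentially independently, and then observe that \eqref{eq:36} is the formal consequence of \eqref{AL}, \eqref{AT} together with the linear combination \eqref{RT1} expressing $\ssw$ in terms of $\lambda$ and $\et$, while the main identity \eqref{eq:1} of Theorem \ref{MTIntr} follows by adding \eqref{AK} to \eqref{eq:36} (after dividing \eqref{AK} by $8$). So the real content is the three formulas \eqref{AK}, \eqref{AL}, \eqref{AT}, and each is handled by a different tool.

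For \eqref{AK}, the plan is a direct lattice computation. Write $\widetilde{\sigma} = l' * \widetilde{\sigma_{\mathid{can}}}$ with $(l', E_v^*) = r_v$, so that $c_1(\widetilde{\sigma}) = c_1(\widetilde{\sigma_{\mathid{can}}}) + 2l'$ and $c_1(\widetilde{\sigma}_i) = c_1(\widetilde{\sigma_{\mathid{can}},i}) + 2 R_i(l')$, using that $R_i$ carries the canonical class to the canonical class. Setting $x \coloneqq \tfrac12 c_1(\widetilde{\sigma}) = K/2 + l'$ (where $K$ is the canonical class), one has $x_i = R_i(x)$ and the left side of \eqref{AK} becomes $4(x^2 - \sum_i x_i^2) + (\lvert\cV(\Gamma)\rvert - \sum_i \lvert\cV(\Gamma_i)\rvert)$; the vertex count difference is $1$. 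Lemma \ref{DetRel}\eqref{eq:8} then gives $x^2 - \sum_i x_i^2 = - d (x, E_v^*)^2 / a_{vv}$, so everything reduces to computing $(x, E_v^*) = (K/2, E_v^*) + r_v$. The quantity $(K, E_v^*)$ is expressed via $(c_1(\widetilde{\sigma_{\mathid{can}}}), E_w) = b_w + 2$ and the matrix $I^{-1}$; unwinding the definition \eqref{eq:16} of $\alpha_v$ together with \eqref{eq:30} shows $(K, E_v^*) = \alpha_v / d + 1$, whence $2d(x, E_v^*) = \alpha_v + d + 2 d r_v$ and \eqref{AK} drops out.

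For \eqref{AL}, the plan is to use the Ratiu-type surgery/closed formula \eqref{eq:15} for $\lambda$ and its analogue for each $\lambda_i$, and subtract. The terms $\sum_w b_w$ and $3\lvert\cV(\Gamma)\rvert$ are almost additive: $\sum_{w \in \cV(\Gamma)} b_w - \sum_i \sum_{w\in\cV(\Gamma_i)} b_w = b_v$ and $3\lvert\cV(\Gamma)\rvert - 3\sum_i \lvert\cV(\Gamma_i)\rvert = 3$, so the non-trivial part is the comparison of $\tfrac{1}{d}\sum_w (\delta_w - 2) a_{ww}$ with $\sum_i \tfrac{1}{d_i}\sum_{w\in\cV(\Gamma_i)}(\delta_{w,i} - 2) a_{ww,i}$. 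Here the degrees shift only at the $v_i$ (where $\delta_{v_i} = \delta_{v_i,i} + 1$) and at $v$ itself (which contributes $\delta_v - 2$ copies of $a_{vv}$, absent from all $\Gamma_i$), and the self-intersection numbers $a_{ww}$ vs.\ $a_{ww,i}$ are related by the determinant identities of Lemma \ref{Fact} and Corollary \ref{Coruj} (in particular \eqref{eq:31} and \eqref{eq:34}). Collecting these corrections and using $\beta_v = \sum_w (\delta_w - 2) a_{vw}^2$ together with \eqref{eq:16}, \eqref{eq:18}, $a_{vv}$, the bookkeeping should collapse to $-3 + (d^2 - \beta_v)/(d a_{vv})$.

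For \eqref{AT}, which I expect to be the hard step, the plan is to work on the Fourier side via \eqref{RT2}, \eqref{RT3}, \eqref{eq:11} and the definition of $\cH_{\sigma,v}$ in terms of the same products $P_{\rho,v}(t)$. The idea is: $\et_\sigma(\Sigma)$ is $\tfrac1{\lvert H\rvert}\sum_{\rho\neq 1} \rho(h_\sigma)^{-1} P_{\rho,u_\rho}(1)$, and $\cH_{\sigma,v}(t) = \tfrac1{\lvert H\rvert}\sum_\rho \rho(h_\sigma)^{-1} P_{\rho,v}(t)$, whose polynomial part at $t=1$ one must extract. Group the characters of $H$ according to their restriction to $H_i$; a character trivial on all the $H_i$ but nontrivial on $H$ contributes to $\et_\sigma$ but corresponds under $P_{\rho,v}$ to the ``diagonal'' part that produces the rational-function tail of $\cH_{\sigma,v}$, governed by Lemma \ref{ELEM} (the identities \eqref{ELEM1}--\eqref{ELEM3} for $\sum_{\alpha^d=\epsilon}\alpha^{-q}/(1-\alpha t)^k$ are exactly the tools needed to split $P_{\rho,v}(t)$ into its polynomial part, its $\Laurentpartat{1}$ part, and the rest). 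Concretely one writes $P_{\rho,v}(t)$ in the variable $t$, uses that the exponents $a_{wv}$ satisfy $\prod_w a_{wv}^{\delta_w-2} = 1/a_{vv}$ and the factorization $a_{vw} = a_{v_i w,i} \det(\Gamma\setminus v\setminus\Gamma_i)$ to relate $P_{\rho,v}$ for $\Gamma$ to the $P_{\rho_i,v_i}$ for the $\Gamma_i$, extracts $\cH^{\mathid{pol}}_{\sigma,v}(1)$, and identifies the leftover constant. The constant-term contributions at $t=1$ from the genuinely ``new'' denominators are precisely what produce $\tfrac{d^2-\beta_v}{24 d a_{vv}} - \tfrac{(\alpha_v + d + 2dr_v)^2}{8 d a_{vv}}$ via \eqref{eq:17} and \eqref{ELEM3}, where $\alpha_v$ and $\beta_v$ enter as the first and second ``moments'' $\sum_w (\delta_w-2) a_{vw}$ and $\sum_w (\delta_w - 2) a_{vw}^2$ of the exponent data. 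The main obstacle is organizing this character-by-character splitting cleanly — in particular tracking the role of $h_\sigma$ (equivalently the shift $r_v$) and making sure the polynomial part $\cH^{\mathid{pol}}_{\sigma,v}(1)$ on the right of \eqref{AT} is exactly the one that cancels against the $-\cH^{\mathid{pol}}_{\sigma,v}(1)$ appearing in \eqref{eq:36}, so that \eqref{AL} and \eqref{AT} really do recombine via \eqref{RT1} into \eqref{eq:36}.
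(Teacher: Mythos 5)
Your overall decomposition matches the paper's: prove \eqref{AK}, \eqref{AL}, \eqref{AT}, get \eqref{eq:36} by combining \eqref{AL} and \eqref{AT} via \eqref{RT1}, and recover \eqref{eq:1} from \(\tfrac18\)\eqref{AK} plus \eqref{eq:36}. Your treatment of \eqref{AK} is essentially the paper's (apply Lemma~\ref{DetRel}\eqref{eq:8} to \(c_1(\widetilde\sigma)\), and evaluate \((c_1(\widetilde{\sigma_{\mathid{can}}}),E_v^*)=1+\alpha_v/d\) from \eqref{eq:4} and \eqref{eq:16}). For \eqref{AL} your plan has the right shape (subtract the Ratiu formulas \eqref{eq:15}), but the identities you cite, Lemma~\ref{Fact} and Corollary~\ref{Coruj}, are not what makes the bookkeeping collapse: the paper instead feeds \(x=E_w^*\) (\(w\neq v\)) and \(x=E_v\) into \eqref{eq:8} to obtain the two-term relations \eqref{eq:13}, \(-a_{ww}/d + a_{ww,i}/d_i = -a_{vw}^2/(d a_{vv})\), and \eqref{eq:14}, \(b_v+\sum_i a_{v_i v_i,i}/d_i = -d/a_{vv}\), which together with \eqref{eq:18} give \eqref{AL} as a straight linear combination. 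You have not carried this out and the route you sketch is not obviously equivalent, though it is plausible.

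The genuine gap is in \eqref{AT}. You correctly flag that organizing the character-by-character splitting is the obstacle, but the mechanism the paper uses to overcome it is absent from your plan: the pseudo-characters of \S\ref{PCH}. To evaluate \(\cH^{\mathid{pol}}_{\sigma,v}(1)=\constanttermatone{\cH_{\sigma,v}}-\constanttermatone{\cH^{<0}_{\sigma,v}}\), one must control the entire negative-degree tail \(\cH^{<0}_{\sigma,v}\), i.e.\ the sum over all characters \(\rho\in\widehat H\) and over all poles \(\alpha\) of \(P_{\rho,v}\); these poles are roots of unity other than \(1\) as well. The paper handles this via the change of variables \(\psi(w)\coloneqq\rho(w)\alpha^{-a_{vw}}\), which turns pairs \((\rho,\alpha)\) into pseudo-characters \(\psi\in\widetilde H\) with \(\df(\psi)=\alpha^d\) and converts \(\Laurentpartat{\alpha}P_{\rho,v}(t)\) into \(\alpha^{dr_v}(\Laurentpartat{1}P_{\psi,v})(\alpha t)\); Lemma~\ref{Psi} then computes \(\Laurentpartat{1}P_{\psi,v}\) in terms of \(P_{\psi_i,v_i}(1)\), and \eqref{ELEM1} sums over \(\alpha\). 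Your alternative, grouping characters of \(H\) by their restrictions to the \(H_i\), does not capture the poles at nontrivial roots of unity, so it cannot isolate \(\cH^{<0}_{\sigma,v}\). Moreover, the Fourier side also requires the pointwise additivity formula of Proposition~\ref{et} for \(\widehat{\et_\sigma}(\rho)\) (which rests on Lemma~\ref{lem:1} and the three-term determinant identity \eqref{eq:28} via Corollary~\ref{Coruj}); that intermediate result is also missing from your sketch, and without it there is no way to compare \(\et_\sigma(\Sigma)-\sum_i\et_{\sigma_i}(\Sigma_i)\) with the constant term of \(\cH_{\sigma,v}\).
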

Equation~\eqref{eq:36} is a combination of~\eqref{AL}, \eqref{AT} and
\eqref{RT1}.  The proof of \eqref{AT} is given in §\ref{Sec:5}.  Here we prove
\eqref{AK} and \eqref{AL} as applications of \eqref{eq:8}.

\begin{proof}[Proof of \eqref{AK}]
We apply~\eqref{eq:8} to \(x\coloneqq c_1(\widetilde{\sigma})\).  Then
\(x_i=c_1(\widetilde{\sigma}_i)\), and 
\begin{equation}
  \label{eq:9}
  {c_1(\widetilde{\sigma})}^2 - \sum_i {c_1(\widetilde{\sigma}_i)}^2 = - \frac{d
    {(c_1(\widetilde{\sigma}), E^*_v)}^2}{a_{v v}}.
\end{equation}
By the definition of \(r_v\):
\begin{equation}
  \label{eq:10}
  2r_v = (c_1(\widetilde{\sigma}) - c_1(\widetilde{\sigma_{\mathid{can}}}), E^*_v).
\end{equation}
Next, we compute \((c_1(\widetilde{\sigma_{\mathid{can}}}), E^*_v)\).
Expressing the Chern class from \eqref{eq:4} as
\begin{equation*}
   c_1(\widetilde{\sigma_{\mathid{can}}}) = \sum_w E_w - \sum_w (\delta_w - 2) E^*_w,
\end{equation*}
and then using  \eqref{eq:16} we get
\begin{equation}
  \label{eq:12}
   (c_1(\widetilde{\sigma_{\mathid{can}}}), E^*_v) = 1 + \frac{\alpha_v}{d}.
\end{equation}
Finally, combining \eqref{eq:9}, \eqref{eq:10} and \eqref{eq:12} gives the desired
formula.
\end{proof}

\begin{proof}[Proof of \eqref{AL}]
This time, we apply~\eqref{eq:8} first to \(x\coloneqq E^*_w\) for some $w \neq
v$.  Then \(x_i = E^*_{w,i}\) if \(w \in \cV(\Gamma_i)\), and \(x_i=0\) otherwise.
Hence, \eqref{eq:8} reads as
\begin{equation}
  \label{eq:13}
  -\frac{a_{w w}}{d} + \frac{a_{w w,i}}{d_i} = -
  \frac{a_{v w}^2}{d a_{v v}}, \quad w \in \cV(\Gamma_i).
\end{equation}
Next, we apply~\eqref{eq:8} to \(x \coloneqq E_v\).  Then \(x_i =
E^*_{v_i,i}\) and we get:
\begin{equation}
  \label{eq:14}
  b_v + \sum_i \frac{a_{v_i v_i, i}}{d_i} = - \frac{d}{a_{v v}}.
\end{equation}
The claimed equality is a linear combination of \eqref{eq:18}, \eqref{eq:13}, \eqref{eq:14}
and \eqref{eq:15}, where the latter is applied to \(\Sigma\) and all the \(\Sigma_i\).
\end{proof}

\section{Proof of \protect\eqref{AT}.}\label{Sec:5}

\subsection{Breaking up the torsion}
\label{sec:breaking-up-torsion}

We start with some preparations.  For an arbitrary map $\psi\colon \cV(\Gamma)\to \setC^*$ we define
\begin{align*}
  {\cV(\Gamma)}_\psi &\coloneqq \{w\in\cV(\Gamma) : \psi(w)=1\},\\
  \supp(\psi) &\coloneqq \cV(\Gamma)\setminus {\cV(\Gamma)}_\psi,\\
  \psi_i &\coloneqq {\left. \psi \right|}_{\cV(\Gamma_i)}\colon \cV(\Gamma_i)\to \setC^*.
\end{align*}
\begin{lemma}
  \label{lem:1}
  Let \(\Gamma\) be a negative definite tree and \(\psi\colon \cV(\Gamma) \to \setC^*\) a function on it.
  Then the least degree term of the Laurent series of \(P_{\psi,v}\) 
(see \eqref{eq:99}) at \(1\) is
  \begin{equation}
    \label{eq:19}
    P_{\psi,v}(t) = \prod_{w  \notin {\cV(\Gamma)}_\psi} {\left( 1 - \psi (w) \right)}^{\delta_{w} - 2}
    \cdot \prod_{w \in {\cV(\Gamma)}_\psi} a_{v w}^{\delta_{w} - 2} \cdot
    {\left( 1 - t \right)}^{n} + O({(1 - t)}^{n+1}),
  \end{equation}
  where
  \begin{equation}
    \label{eq:20}
    \begin{split}
      n &\coloneqq \sum_{w \in {\cV(\Gamma)}_\psi} (\delta_w - 2) \\
      &= 
      -2 \lvert \{ \text{components of \({\cV(\Gamma)}_\psi\)} \} \rvert
      + \lvert \{ \text{edges going out of \({\cV(\Gamma)}_\psi\)} \} \rvert.
    \end{split}
  \end{equation}
  In particular, if every component of \({\cV(\Gamma)}_\psi\) has a vertex with
  at least two outgoing edges (e.g.\ \(\psi\) is a non-trivial character) then
  \(n \geq 0\) with equality if and only if all components have exactly two
  outgoing edges.
\end{lemma}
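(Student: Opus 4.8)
The plan is to analyze the behaviour of each factor $(1-\psi(w)t^{a_{wv}})^{\delta_w-2}$ at $t=1$ separately and then multiply. For a vertex $w$ with $\psi(w)\neq 1$, the factor $1-\psi(w)t^{a_{wv}}$ is nonzero at $t=1$, so its contribution to the leading term is simply $(1-\psi(w))^{\delta_w-2}$ and it contributes nothing to the order of vanishing. For a vertex $w$ with $\psi(w)=1$, we have $1-t^{a_{wv}} = a_{wv}(1-t) + O((1-t)^2)$, so this factor contributes $a_{vv}^{\,0}$... more precisely $(a_{wv}(1-t))^{\delta_w-2}$ to leading order, i.e.\ a factor $a_{wv}^{\delta_w-2}$ in the coefficient and a factor $(1-t)^{\delta_w-2}$ in the power. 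Multiplying everything together gives exactly \eqref{eq:19} with $n = \sum_{w\in\cV(\Gamma)_\psi}(\delta_w-2)$. I would write this out as: $P_{\psi,v}(t) = \prod_{w\notin\cV(\Gamma)_\psi}(1-\psi(w))^{\delta_w-2}\cdot\prod_{w\in\cV(\Gamma)_\psi}(a_{wv}(1-t))^{\delta_w-2}\cdot(1+O(1-t))$, and note $a_{wv}=a_{vw}$ by symmetry of $I^{-1}$.

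Next I would establish the combinatorial reformulation of $n$ in \eqref{eq:20}. Here I use the standard identity that for any finite graph $G$, $\sum_{w\in\cV(G)}(\delta_w^G - 2) = 2|E(G)| - 2|\cV(G)|$ where $\delta_w^G$ is the degree within $G$. Applied to the induced subgraph on $\cV(\Gamma)_\psi$: writing $\delta_w$ for the degree in the full graph $\Gamma$ and $\delta_w^{\text{int}}$ for the degree within $\cV(\Gamma)_\psi$, we have $\delta_w = \delta_w^{\text{int}} + (\text{number of edges from }w\text{ out of }\cV(\Gamma)_\psi)$. Summing, $\sum_{w\in\cV(\Gamma)_\psi}(\delta_w-2) = \sum_w(\delta_w^{\text{int}}-2) + |\{\text{edges going out of }\cV(\Gamma)_\psi\}| = 2|E_{\text{int}}| - 2|\cV(\Gamma)_\psi| + |\{\text{outgoing edges}\}|$. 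Since $\Gamma$ is a tree, each connected component of the induced subgraph on $\cV(\Gamma)_\psi$ is itself a tree, so $|E_{\text{int}}| = |\cV(\Gamma)_\psi| - |\{\text{components}\}|$; substituting gives $n = -2|\{\text{components of }\cV(\Gamma)_\psi\}| + |\{\text{edges going out of }\cV(\Gamma)_\psi\}|$, which is \eqref{eq:20}.

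For the final ``in particular'' clause: if every component $C$ of $\cV(\Gamma)_\psi$ has a vertex with at least two outgoing edges, then that component alone already contributes at least $2$ to the count of outgoing edges; more carefully, since $\Gamma$ is a connected tree with $v\notin\cV(\Gamma)_\psi$ possibly or possibly in it --- I would argue that each component $C$ of $\cV(\Gamma)_\psi$, being a subtree of the tree $\Gamma$ which is not all of $\Gamma$ (as $\Gamma$ is connected and has $v$), must have at least one outgoing edge, but under the stated hypothesis it has at least two, so $|\{\text{outgoing edges of }C\}|\geq 2 = 2\cdot|\{C\}|$. Summing over components yields $|\{\text{outgoing edges}\}|\geq 2|\{\text{components}\}|$, i.e.\ $n\geq 0$, with equality iff every component has exactly two outgoing edges. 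Finally I would remark that for a non-trivial character $\psi$, the condition $\df_w(\psi)=1$ is automatic... no: here $\psi$ is an honest character of $H$ pulled back to $\cV(\Gamma)$, so $\psi = \rho\circ[\,\cdot\,]$; the point is that such $\psi$ cannot be constant equal to $1$ on any component $C$ while being $\neq 1$ somewhere unless $C$ has $\geq 2$ outgoing edges --- I would justify this by the observation that if a component $C$ had exactly one outgoing edge, deleting that edge would disconnect $\Gamma$ and (via the tree/lattice structure, e.g.\ \eqref{eq:100} and the fact that $\rho$ is determined by its values on a generating set of $E^*_w$) force $\psi$ to be trivial on the whole relevant block. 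I expect the main obstacle to be precisely this last justification --- verifying cleanly that a non-trivial character of $H$ restricted to a ``leaf-like'' component (one outgoing edge) cannot be identically $1$ there; the rest is bookkeeping with Taylor expansions and the handshake lemma for trees.
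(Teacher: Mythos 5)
Your argument for \eqref{eq:19} and \eqref{eq:20} is correct and is essentially the paper's own (very terse) proof spelled out: expand each factor $(1-\psi(w)t^{a_{wv}})^{\delta_w-2}$ at $t=1$, use $1-t^{a_{wv}}=a_{wv}(1-t)+O((1-t)^2)$ for $w\in\cV(\Gamma)_\psi$ together with $a_{wv}=a_{vw}$, multiply, and then rewrite $\sum_{w\in\cV(\Gamma)_\psi}(\delta_w-2)$ via the handshake lemma and the tree count $|E_{\mathrm{int}}|=|\cV(\Gamma)_\psi|-\lvert\{\text{components}\}\rvert$. The deduction that $n\geq 0$ with equality precisely when every component has exactly two outgoing edges is also fine.

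The one place you stall — justifying that a non-trivial character $\psi$ satisfies the hypothesis — has a cleaner and more local proof than the ``delete the outgoing edge and propagate'' idea you sketch, and it also yields the precise vertex-level form of the hypothesis rather than merely a total edge count. Use the character relation $\df_w(\psi)=1$ directly at a vertex $w\in\cV(\Gamma)_\psi$ that has an outgoing edge: it reads $\psi(w)^{b_w}\prod_j\psi(w(j))=1$, and since $\psi(w)=1$ and $\psi(w(j))=1$ for every neighbour $w(j)\in\cV(\Gamma)_\psi$, the product of $\psi(w(j))$ over the neighbours \emph{outside} $\cV(\Gamma)_\psi$ must equal $1$. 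A single outgoing neighbour would contribute one factor $\neq 1$, contradiction; so $w$ has at least two outgoing edges. Thus every component of $\cV(\Gamma)_\psi$ that meets the rest of $\Gamma$ has such a vertex, and the only component without outgoing edges would be all of $\Gamma$, forcing $\psi\equiv 1$. This same local argument is what the proof of Lemma~\ref{Psi} later relies on (with one vertex exempted for pseudo-characters), so it is worth having in this sharp form. With that replacement your proof is complete and matches the paper's intent.
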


\begin{proof}
This is mainly a repetition of \cite[A.7]{nemethi02._seiber_witten}.
The first formula obviously follows from~\eqref{eq:99} by taking the least
degree term in \(t-1\) of every factor of the product.
This gives \(\sum_{w \in {\cV(\Gamma)}_\psi} (\delta_w - 2)\) for the degree \(n\) of the least degree
term.
The second equality of~\eqref{eq:20} is a well-known
identity for circuit-free graphs.
\end{proof}

\begin{proposition}\label{et}
  For all non-trivial character $\rho \in \widehat{H}$ and \Spinc{} structure $\sigma =
  h * \sigma_{\mathid{can}}$ of \(\Sigma\) with \(h \in H\)
  \begin{equation}
    \label{eq:23}
    \frac{1}{d} \widehat{\et_{\sigma}}(\rho) = \frac{1}{d} {\rho(h)}^{-1} \cdot P_{\rho,v}(1) +
    \begin{cases}
      \frac{1}{d_i} \widehat{\et_{\sigma_i}}(\rho_i) & \text{if \({\left. \rho
        \right|}_{(\cV(\Gamma) \setminus \cV(\Gamma_i)) \cup \{v_i\}} = 1\)} \\
      0 & \text{otherwise},
    \end{cases}
  \end{equation}
  where \(\sigma_i\) is the restriction of \(\sigma\) defined in Definition
  (\ref{MT})(\ref{item:2}).
\end{proposition}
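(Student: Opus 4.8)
The plan is to compute the Fourier transform of the Reidemeister–Turaev torsion of $\Sigma$ by splitting the product defining $P_{\rho,v}$ along the distinguished vertex $v$, and to recognize the remaining factors as the Fourier transform for the subgraphs $\Gamma_i$. First I would recall from \eqref{RT3} that $\widehat{\et_\sigma}(\rho) = \rho(h)^{-1} P_{\rho,u_\rho}(1)$ for a suitable vertex $u_\rho$, and note that by \eqref{eq:100} (i.e.\ Lemma~\ref{Fact}(\ref{Fact0})) the exponents $a_{wu}$ appearing in $P_{\rho,u}$ depend on the choice of base vertex $u$; so the first task is to show that the value $P_{\rho,u}(1)$ is, up to the unit $\rho(h)^{-1}$, independent of the admissible choice of $u$, which lets us take $u = v$ throughout (replacing $v$ by $v_i$ on the subgraph side). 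This is essentially the content of \cite[3.8, 5.7]{nemethi02._seiber_witten} and Lemma~\ref{lem:1}: the least-degree term of the Laurent series of $P_{\rho,v}$ at $1$ has degree $n = \sum_{w \in \cV(\Gamma)_\rho}(\delta_w - 2) \geq 0$, and $P_{\rho,v}(1) \neq 0$ precisely when $n = 0$, i.e.\ every component of the vertices fixed by $\rho$ has exactly two outgoing edges.

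Next I would perform the factorization. Write
\begin{equation*}
  P_{\rho,v}(t) = \prod_{w \in \cV(\Gamma_i)} {\bigl(1 - \rho([E^*_w])\, t^{a_{wv}}\bigr)}^{\delta_w - 2} \cdot \prod_{w \notin \cV(\Gamma_i)} {\bigl(1 - \rho([E^*_w])\, t^{a_{wv}}\bigr)}^{\delta_w - 2},
\end{equation*}
and use Lemma~\ref{Fact}(\ref{Facta}), namely $a_{vw} = a_{v_i w, i}\cdot \det(\Gamma \setminus v \setminus \Gamma_i)$, to relate the exponents inside the block for $\Gamma_i$ to the intrinsic quantities $a_{v_i w, i}$ of $\Gamma_i$. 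Under the hypothesis ${\rho|}_{(\cV(\Gamma)\setminus\cV(\Gamma_i))\cup\{v_i\}} = 1$, the outside block contributes only through vertices $w$ with $\rho([E^*_w]) = 1$, and its limit at $t=1$ collapses (via Lemma~\ref{lem:1} and the product formula \eqref{eq:34}, $\prod_{w \notin \cV(\Gamma_i)} a_{wv}^{\delta_w - 2} = 1/d_i$) to a clean constant; meanwhile the inside block becomes $\rho_i(h_i)\cdot$ (something proportional to) $P_{\rho_i, v_i}(1)$ evaluated on $\Gamma_i$, where $\rho_i \in \widehat{H_i}$ is the induced character and $h_i \in H_i$ tracks the \Spinc{} shift — here I would invoke the last paragraph of §\ref{sec:invar-dist-vertex}, which identifies the restriction $\sigma_i = [R_i(l')] * \sigma_{\mathid{can},i}$ with $r_v = (l', E_v^*) \in (-1,0]$, to match $h_i$ correctly. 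Dividing through by $d$ and comparing with $\tfrac{1}{d_i}\widehat{\et_{\sigma_i}}(\rho_i) = \tfrac{1}{d_i}\rho_i(h_i)^{-1}P_{\rho_i, v_i}(1)$ should yield \eqref{eq:23}.

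In the complementary case — when ${\rho|}_{(\cV(\Gamma)\setminus\cV(\Gamma_i))\cup\{v_i\}}$ is \emph{not} identically $1$ — I expect to show the second summand vanishes because some vertex outside $\cV(\Gamma_i) \setminus \{v_i\}$, or $v_i$ itself, has $\rho([E^*_w]) \neq 1$, which forces (by the degree count in Lemma~\ref{lem:1} applied after the splitting) the inside block to acquire a positive-order zero at $t = 1$, killing that part of the contribution in the limit. Care is needed to handle the edge between $v$ and $v_i$ and the possibility that $v_i$ alone carries the obstruction. The cleanest way is probably to organize everything around the set $\cV(\Gamma)_\rho$ and its components, applying Corollary~\ref{Coruj} (especially \eqref{eq:33}) to evaluate the products of $a_{wv}$ over sub-collections of vertices.

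The main obstacle will be the bookkeeping of the two unit-valued prefactors: the character values $\rho(h)^{-1}$ versus $\rho_i(h_i)^{-1}$, and the determinant factors $\det(\Gamma \setminus v \setminus \Gamma_i)$ coming from Lemma~\ref{Fact}(\ref{Facta}) that rescale the exponents. Getting these to cancel exactly — so that the somewhat arbitrary normalizations in \eqref{RT2}, \eqref{RT3} and in the definition of $\sigma_i$ conspire to leave precisely $\tfrac{1}{d}\rho(h)^{-1}P_{\rho,v}(1)$ plus $\tfrac{1}{d_i}\widehat{\et_{\sigma_i}}(\rho_i)$ — is where the proof lives or dies. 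I would isolate this as a lemma (independence of $P_{\rho,\cdot}(1)$ under change of base vertex, up to an explicit unit) before attempting the full splitting.
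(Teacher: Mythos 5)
Your general plan --- case analysis, Lemma~\ref{lem:1} for the order of vanishing at $t=1$, Corollary~\ref{Coruj} for the products of $a_{wv}$ --- is the right toolkit, and the easy cases go through as you sketch them: if $\rho$ is nontrivial at $v$ or a neighbour, one takes $u_\rho=v$ in~\eqref{RT3} and~\eqref{eq:23} is immediate; if $\rho$ is trivial there but $n>0$, all three terms vanish. But your proposal leaves the hard case unresolved (you flag it yourself as ``where the proof lives or dies''), and the decomposition you propose --- splitting $P_{\rho,v}$ into a $\cV(\Gamma_i)$-block and its complement --- is not the one that works: each block separately contains factors that vanish or blow up at $t=1$ (e.g.\ ${(1-t^{a_{vv}})}^{\delta_v-2}$ sits in the complement), so the two blocks do not have finite limits individually, and the useful split is by ${\cV(\Gamma)}_\rho$ versus $\supp\rho$, not by $\cV(\Gamma_i)$ versus the rest.

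What the paper does in the hard case (namely $\rho$ trivial at $v$ and all the $v_i$, $n=0$, $\supp\rho\subset\cV(\Gamma_i)$) is to single out the vertex $u_\rho$ of the component of ${\cV(\Gamma)}_\rho$ containing $v$ at which its two outgoing edges meet, decompose $\Gamma$ around $v$ and $u_\rho$ into subgraphs $G'$, $G$, $G''$ as in Lemma~\ref{Fact}(\ref{Factc}) with $\supp\rho\subset\cV(G'')$, and then use~\eqref{eq:33} to express all three quantities $P_{\rho,v}(1)$, $P_{\rho,u_\rho}(1)$, $P_{\rho_i,u_\rho}(1)$ as a single common factor $\prod_{w\notin{\cV(\Gamma)}_\rho}{\left((1-\rho([E^*_w]))/\det(G''\setminus\overline{u_\rho w})\right)}^{\delta_w-2}$ times explicit ratios of subgraph determinants. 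The character prefactors match ($\rho(h)=\rho_i(h_i)$), the common factor cancels, and identity~\eqref{eq:23} becomes precisely the determinant identity~\eqref{eq:28}. That reduction to~\eqref{eq:28} is the actual content of the proof, and it is missing from your outline --- you gesture at ``bookkeeping of prefactors'' but do not identify the vertex $u_\rho$, the $G', G, G''$ decomposition, or the target determinant identity. Incidentally, the ``first task'' you set yourself --- independence of $P_{\rho,u}(1)$ of the admissible base vertex $u$ up to the unit $\rho(h)^{-1}$ --- requires no separate proof: it is exactly the content of~\eqref{RT3}.
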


\begin{proof}
Obviously, if \(\rho(v)=1\) then \(\rho_i \coloneqq {\left. \rho \right|}_{\cV(\Gamma_i)}\) is a
character of \(H_i\).

The proof of the proposition is a case-by-case verification.

First, let us consider the case when \(\rho\) is non-trivial at \(v\) or one of its neighbours.
Then we can choose \(u_\rho \coloneqq v\) in \eqref{RT3}, so
\eqref{eq:23} immediately follows.

In the remaining cases, \(\rho\) is trivial on \(v\) and its 
neighbours \(v_i\).    By the second part of Lemma~\ref{lem:1},
all three terms of~\eqref{eq:23} are \(0\) (because \(n > 0\)) unless every
component of \({\cV(\Gamma)}_\rho\) has exactly two outgoing edges.
Hence the only remaining case is when every component of 
\({\cV(\Gamma)}_\rho\) has exactly two outgoing edges.  Therefore 
$\sum_{w \notin {\cV(\Gamma)}_\rho} (\delta_w - 2) = -2$, 
and there exists an index $i$ with $\supp(\psi) \subset \cV(\Gamma_i)$.  Hence,
the upper case of Equation~\eqref{eq:23} should hold.

Let \(u_\rho\) be the vertex
of the component \({\cV(\Gamma)}_\rho(v)\)
of \({\cV(\Gamma)}_\rho\) containing \(v\) where its two outgoing edges start.

We decompose \(\Gamma\) into subgraphs as shown in
the next picture.

\begin{center}
  \begin{picture}(0,0)%
\includegraphics{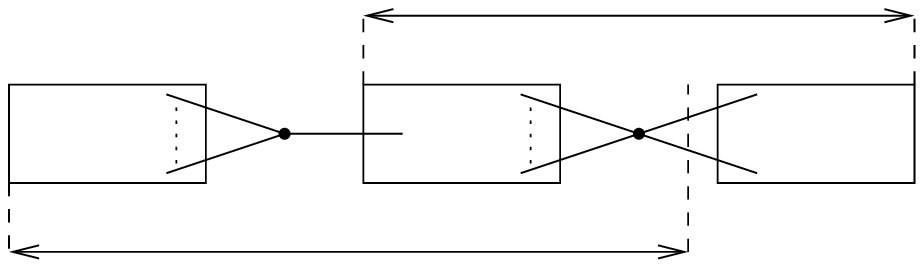}%
\end{picture}%
\setlength{\unitlength}{4144sp}%
\begingroup\makeatletter\ifx\SetFigFontNFSS\undefined%
\gdef\SetFigFontNFSS#1#2#3#4#5{%
  \reset@font\fontsize{#1}{#2pt}%
  \fontfamily{#3}\fontseries{#4}\fontshape{#5}%
  \selectfont}%
\fi\endgroup%
\begin{picture}(4842,1606)(211,-3221)
\put(1261,-2446){\makebox(0,0)[lb]{\smash{{\SetFigFontNFSS{8}{9.6}{\familydefault}{\mddefault}{\updefault}{\color[rgb]{0,0,0}$G'$}%
}}}}
\put(2926,-2446){\makebox(0,0)[lb]{\smash{{\SetFigFontNFSS{8}{9.6}{\familydefault}{\mddefault}{\updefault}{\color[rgb]{0,0,0}$G$}%
}}}}
\put(226,-2446){\makebox(0,0)[lb]{\smash{{\SetFigFontNFSS{8}{9.6}{\familydefault}{\mddefault}{\updefault}{\color[rgb]{0,0,0}$\Gamma$:}%
}}}}
\put(2116,-2626){\makebox(0,0)[lb]{\smash{{\SetFigFontNFSS{8}{9.6}{\familydefault}{\mddefault}{\updefault}{\color[rgb]{0,0,0}$v$}%
}}}}
\put(3691,-2626){\makebox(0,0)[lb]{\smash{{\SetFigFontNFSS{8}{9.6}{\familydefault}{\mddefault}{\updefault}{\color[rgb]{0,0,0}$u_\rho$}%
}}}}
\put(4546,-2446){\makebox(0,0)[lb]{\smash{{\SetFigFontNFSS{8}{9.6}{\familydefault}{\mddefault}{\updefault}{\color[rgb]{0,0,0}$G''$}%
}}}}
\put(3691,-1726){\makebox(0,0)[lb]{\smash{{\SetFigFontNFSS{8}{9.6}{\familydefault}{\mddefault}{\updefault}{\color[rgb]{0,0,0}$\Gamma_i$}%
}}}}
\put(2296,-3166){\makebox(0,0)[lb]{\smash{{\SetFigFontNFSS{8}{9.6}{\familydefault}{\mddefault}{\updefault}{\color[rgb]{0,0,0}$\cV(\Gamma)_\rho(v)$}%
}}}}
\end{picture}%

\end{center}

We express the terms of~\eqref{eq:23} in terms of determinants of
subgraphs using \eqref{RT2}, \eqref{RT3} and \eqref{eq:33}:
\begin{align*}
  P_{\rho,v}(1) 
 &= \frac{\det(G') \cdot {\left( \det(G \setminus \overline{v u_\rho})
      \right)}^{2}}{\det(G \cup G'' \cup u_\rho)} \prod_{w \notin {\cV(\Gamma)}_\rho} {\left(
      \frac{(1-\rho([E^*_w]))}{\det(G'' \setminus \overline{u_\rho w})} \right)}^{\delta_w-2}, \\
   \widehat{\et_{\sigma}}(\rho) =
P_{\rho, u_\rho}(1) &= \frac{\det(G' \cup G \cup v)}{\det(G'')}
 \prod_{w \notin {\cV(\Gamma)}_\rho} {\left(
      \frac{(1-\rho([E^*_w]))}{\det(G'' \setminus \overline{u_\rho w})} \right)}^{\delta_w-2}, \\
\widehat{\et_{\sigma_i}}(\rho_i) =
 P_{\rho_i, u_\rho}(1) &= \frac{\det(G)}{\det(G'')}
 \prod_{w \notin {\cV(\Gamma)}_\rho} {\left(
      \frac{(1-\rho([E^*_w]))}{\det(G'' \setminus \overline{u_\rho w})} \right)}^{\delta_w-2}.
\end{align*}
Note that
 \(\rho(h) = \rho_i(h_i)\) where \(\sigma_i = h_i * \sigma_{\mathid{can},i}\)  by
Definition~\ref{MT}, and hence these factor out of \eqref{eq:23}.
We can also factor out the \(\prod_{w \notin {\cV(\Gamma)}_\rho}\) product.
Finally, recall that
 $d_i = \det(G \cup G''\cup u_\rho)$ and \(d = \det \Gamma\).
Hence~\eqref{eq:23} reduces to (\ref{Fact})(\ref{Factc}).
\end{proof}

\subsection{Principal part of the Hilbert function}
\label{sec:princ-part-hilb}

 Next, we concentrate on $\cH_{\sigma,v}$.  We invite the reader to recall
the notations from (\ref{PCH})\textendash(\ref{FN}).

\begin{lemma}\label{Psi}
  For every non-trivial pseudo-character \(\psi\) associated with \(v\)
  \begin{equation}
    \label{eq:35}
    \Laurentpartat{1} P_{\psi, v} (t) =
    \begin{cases}
      \frac{1}{d_i} \cdot \frac{P_{\psi_i, v_i}(1) \cdot (1 - \psi_i(v_i))}{1
        - t} & \text{if \(\supp \psi \subseteq \cV(\Gamma_i)\) and \(\psi(v_i) \neq 1\)}, \\
      0 & \text{for all other \(\psi \neq 1\)}.
    \end{cases}
  \end{equation}
\end{lemma}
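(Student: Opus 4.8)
The plan is to analyze the Laurent expansion of $P_{\psi,v}(t)$ at $t=1$, as given by \eqref{eq:99}, and determine when the least-degree term is negative (so that it contributes to $\Laurentpartat{1} P_{\psi,v}$) versus nonnegative (so that it does not). First I would apply Lemma~\ref{lem:1} with the pseudo-character $\psi$ in place of the character: the least-degree term of $P_{\psi,v}$ at $1$ has degree $n = \sum_{w \in \cV(\Gamma)_\psi}(\delta_w - 2)$, with the explicit leading coefficient recorded in \eqref{eq:19}. Since $\psi$ is a pseudo-character associated with $v$, and in particular $\df_w(\psi) = 1$ for $w \neq v$, the set $\cV(\Gamma)_\psi = \{w : \psi(w) = 1\}$ behaves much like the vanishing locus of a character away from $v$; the key point is that $\Laurentpartat{1} P_{\psi,v}$ is nonzero only if $n \le -1$.

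Next I would run the case analysis on the combinatorics of $\cV(\Gamma)_\psi$. By the second formula in \eqref{eq:20}, $n = -2c + e$ where $c$ is the number of components of $\cV(\Gamma)_\psi$ and $e$ is the number of edges leaving $\cV(\Gamma)_\psi$. If some component of $\cV(\Gamma)_\psi$ has a vertex with at least two outgoing edges, then that component contributes $\ge 0$ to $n$, so to get $n \le -1$ every such component must contribute $\le -1$, i.e. have at most one outgoing edge; and components with zero outgoing edges are impossible here since $\Gamma$ is connected and $\psi \ne 1$ (so $\supp\psi \ne \emptyset$). The upshot is that $n = -1$, achieved precisely when $\cV(\Gamma)_\psi$ is a single component with exactly one outgoing edge. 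That outgoing edge must attach to $\supp\psi$; since $\supp\psi$ is the complement and $v \in \supp\psi$ would force $\psi(v) \ne 1$, I would track which component of $\Gamma \setminus v$ contains $\supp\psi$. The pseudo-character condition $\df_w(\psi) = 1$ for $w\ne v$ propagates: if $\supp\psi$ met two components $\Gamma_i, \Gamma_j$, or met $v$ together with a $\Gamma_i$, one gets too many outgoing edges from $\cV(\Gamma)_\psi$, contradicting $n = -1$. Hence $\supp\psi \subseteq \cV(\Gamma_i)$ for a single $i$, and $v \notin \supp\psi$, so $\psi(v) = 1$; then $v_i$ is the unique vertex of $\Gamma_i$ adjacent to $v$, and the single outgoing edge of $\cV(\Gamma)_\psi$ is the edge $v v_i$, which forces $v_i \in \supp\psi$, i.e. $\psi(v_i) \ne 1$. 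This is exactly the hypothesis of the first case of \eqref{eq:35}.

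In that remaining case I would compute the residue explicitly. With $n = -1$, $\Laurentpartat{1} P_{\psi,v}(t) = \dfrac{a}{1-t}$ where, by \eqref{eq:19}, $a = \prod_{w \notin \cV(\Gamma)_\psi}(1-\psi(w))^{\delta_w - 2} \cdot \prod_{w \in \cV(\Gamma)_\psi} a_{vw}^{\delta_w - 2}$ — here I should double-check the sign coming from $(1-t)^{-1}$ versus $(1-t)^{n}$ with $n = -1$, but the coefficient is as in \eqref{eq:19}. Now I compare this with $P_{\psi_i, v_i}(1)$. Since $\supp\psi \subseteq \cV(\Gamma_i)$, the factors with $w \notin \cV(\Gamma)_\psi$ are exactly the factors for $w \in \supp\psi \subseteq \cV(\Gamma_i)$, and using that $\delta_w$ (the degree in $\Gamma$) equals the degree in $\Gamma_i$ for all such $w$ except possibly $v_i$, I rewrite $\prod_{w \notin \cV(\Gamma)_\psi}(1-\psi(w))^{\delta_w - 2}$ in terms of the analogous product over $\Gamma_i$ times a correction at $v_i$, which produces the factor $(1 - \psi_i(v_i))$ after accounting for the edge $vv_i$ lost in passing to $\Gamma_i$. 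For the $a_{vw}$ product: the vertices of $\cV(\Gamma)_\psi$ split as those in $\cV(\Gamma_i)$ and those in $\cV(\Gamma) \setminus \cV(\Gamma_i)$; Corollary~\ref{Coruj}, specifically \eqref{eq:34}, gives $\prod_{w \in \cV(\Gamma)\setminus\cV(\Gamma_i)} a_{vw}^{\delta_w - 2} = 1/d_i$, while \eqref{eq:31} converts the remaining $a_{vw}$ (for $w \in \cV(\Gamma_i) \cap \cV(\Gamma)_\psi = \cV(\Gamma_i) \setminus \supp\psi$) into $a_{v_i w, i}$ up to a factor $\det(\Gamma\setminus v\setminus\Gamma_i)$ which again contributes to cancelling $d_i$. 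Assembling these identifications shows $a = \frac{1}{d_i} P_{\psi_i, v_i}(1)(1-\psi_i(v_i))$, matching \eqref{eq:35}.

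The main obstacle I expect is the bookkeeping in this last step: correctly matching $\delta_w$ in $\Gamma$ versus $\Gamma_i$ at the boundary vertex $v_i$, getting the $(1 - \psi_i(v_i))$ factor with the right exponent, and orchestrating the determinant cancellations from \eqref{eq:31} and \eqref{eq:34} so that everything collapses to $1/d_i$. The combinatorial case analysis itself is routine given Lemma~\ref{lem:1}; the delicate part is purely the algebraic identity in the surviving case, and the pseudo-character hypothesis $\df_w(\psi)=1$ ($w \ne v$) must be used not only to force $\supp\psi \subseteq \cV(\Gamma_i)$ but implicitly to guarantee that $\psi$ restricted to a suitable cover behaves like a genuine character on $\Gamma_i \cup v$ so that the $P_{\psi_i,v_i}$ notation is meaningful.
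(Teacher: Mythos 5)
Your overall route — Lemma~\ref{lem:1} for the degree, a case analysis forced by the pseudo-character relations, then a residue computation using \eqref{eq:19}, \eqref{eq:31} and \eqref{eq:34} — is close to the paper's, and the final conclusion of the case analysis ($\supp\psi\subseteq\cV(\Gamma_i)$ and $\psi(v_i)\neq1$) is correct. But several intermediate claims are wrong or unaddressed, and the paper's computation in the residue step is genuinely different from yours in a way that matters.

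First, the claim that $n=-1$ is ``achieved precisely when $\cV(\Gamma)_\psi$ is a single component with exactly one outgoing edge'' is false. From \eqref{eq:20}, $n=\sum_{\text{comp}}(-2+e_{\text{comp}})$; the pseudo-character relations only force each component \emph{not containing} $v$ to have $\geq 2$ outgoing edges (your sentence ``every such component must contribute $\leq -1$'' draws the wrong conclusion). Thus $n=-1$ means the $v$-component has exactly one outgoing edge (starting at $v$) and all other components, if any, have exactly two; there may well be several components of $\cV(\Gamma)_\psi$ sitting inside $\Gamma_i$. This error does not derail the derivation of $\supp\psi\subseteq\cV(\Gamma_i)$ and $\psi(v_i)\neq1$, since the subsequent arguments (too many outgoing edges, leaf propagation) are correct; but the intermediate statement should be dropped.

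Second, and more substantively: the first alternative of \eqref{eq:35} is asserted whenever $\supp\psi\subseteq\cV(\Gamma_i)$ and $\psi(v_i)\neq1$, including when $\Laurentpartat{1}P_{\psi,v}=0$. Your residue computation via \eqref{eq:19} only covers $n=-1$. You must also treat the subcase where the hypothesis holds but $n\geq0$; there you need $P_{\psi_i,v_i}(1)=0$ so that both sides vanish. This does hold, because $\psi_i$ is a non-trivial character of $H_i$ (this has to be verified from the pseudo-character relations and $\psi(v)=1$, which you note only implicitly), and the components of $\cV(\Gamma_i)_{\psi_i}$ are exactly the non-$v$-components of $\cV(\Gamma)_\psi$ with the same outgoing edge counts, so the order $n_i$ of $P_{\psi_i,v_i}$ at $1$ satisfies $n = n_i - 1$. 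The paper avoids this whole splitting by factorizing directly via \eqref{eq:31} as
$P_{\psi,v}(t)=P_{\psi_i,v_i}(t^D)\cdot(1-\psi_i(v_i)t^{a_{vv_i}})\cdot\prod_{w\notin\cV(\Gamma_i)}(1-t^{a_{wv}})^{\delta_w-2}$
with $D=\det(\Gamma\setminus v\setminus\Gamma_i)$, observing that the first two factors are regular at $1$ while the last has exactly a simple pole (since $\sum_{w\notin\cV(\Gamma_i)}(\delta_w-2)=-1$); the formula then falls out in all subcases at once.

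Third, the remark that the $D$ factor coming from \eqref{eq:31} ``again contributes to cancelling $d_i$'' is inaccurate: if you match the two leading coefficients from \eqref{eq:19}, the $D$ appears raised to the power $n_i=\sum_{w\in\cV(\Gamma_i)_{\psi_i}}(\delta_{w,i}-2)$, which equals $0$ exactly in the case you are computing, so $D^{n_i}=1$ and plays no role; the $1/d_i$ comes entirely from \eqref{eq:34}. If you insist on the \eqref{eq:19} route rather than the paper's factorization, this bookkeeping (including the $n_i>0$ subcase and the single extra $(1-\psi_i(v_i))$ factor from $\delta_{v_i}=\delta_{v_i,i}+1$) needs to be done carefully and in full.
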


\begin{proof}
We apply Lemma~\ref{lem:1}.   By the pseudo-character relations, all 
components of ${\cV(\Gamma)}_\psi$  have at least two outgoing edges except possibly the 
component containing \(v\), which can have only one outgoing edge, which must
start at \(v\).
Hence the lower degree of the Laurent expansion of \(P_{\psi, v}\) at \(1\) is at
least \(-1\) with equality if and only if all the components have the minimum
number of outgoing edges declared above.
In particular, if \(\Laurentpartat{1} P_{\psi, v} \neq 0\) then \(\psi(v_i) \neq 1\) for
some \(i\) and \(\supp(\psi) \subset \cV(\Gamma_i)\).
This proves the lower case
of~\eqref{eq:35}.

To prove the upper case, note that by~\eqref{eq:31} for any $w\in\cV(\Gamma_i)$
\begin{equation*}
  P_{\psi,v}(t) = P_{\psi_i, v_i} (t^{\det (\Gamma \setminus v \setminus \Gamma_i)}) \cdot
  (1 - \psi_i(v_i) t^{a_{v v_i}}) \cdot \prod_{w \notin \cV(\Gamma_i)} {(1 - t^{a_{w v}})}^{\delta_w - 2}.
\end{equation*}
Obviously, $\psi_i$ is a
non-trivial character of $H_i$, hence $P_{\psi_i,v_i}$ is regular
at $1$.
 Moreover,
 $\sum_{w \notin \cV(\Gamma_i)} (\delta_w - 2) = -1$.  Thus
\begin{equation*}
  \Laurentpartat{1} P_{\psi,v}(t) = \frac{1}{1 - t} \cdot P_{\psi_i,v_i}(1) \cdot
  (1 - \psi_i(v_i))\cdot \prod_{w\notin\cV(\Gamma_i)} a_{w v}^{\delta_w - 2}.
\end{equation*}
For the last product one can use \eqref{eq:34}, and this finishes the proof.
\end{proof}

We fix an \(l' \in L'\) with \(\sigma = [l'] * \sigma_{\mathid{can}}\) 
and \(-1 <  r_v=(l',E^*_v) \leq 0\) and \(\sigma_i = [l'_i] *
\sigma_{\mathid{can}, i}\) for the restriction \(l'_i\) of \(l'\) to \(\Gamma_i\).
Note that all the poles $\alpha$  of \(P_{\rho, v}\) are roots of unity.
\begin{equation*}
  d \cdot \cH^{<0}_{\sigma,v}(t) =
  \sum_\alpha \sum_{\rho \in \widehat{H}} {\rho([l'])}^{-1} \Laurentpartat{\alpha} P_{\rho,v}(t) = \sum_\alpha
  \sum_{\substack{\psi \in \widetilde{H} \\ \df(\psi)=\alpha^{d}}}
  {\psi(l')}^{-1} \alpha^{dr_v} (\Laurentpartat{1} P_{\psi,v})(\alpha t),
\end{equation*}
where the last equality is obtained via the substitutions \(\psi(w) \coloneqq
\rho(w) \alpha^{- a_{v w}}\) implying \(\psi(x) = \rho([x]) \alpha^{d (x, E^*_v)}\) for all \(x \in
L'\).  To compute \(\df_w(\psi)\), we have used the identity \(I \cdot I^{-1} = 1\)
in the form
\begin{equation*}
  b_w a_{w v} + \sum_i a_{w v_i} =
  \begin{cases}
    -d & \text{if \(w = v\)} \\
     0 & \text{if \(w \neq v\)}.
 \end{cases}
\end{equation*}

To compute further, we apply Lemma~(\ref{Psi}) to index the pseudo-characters
\(\psi\) for which the summand maybe non-zero by characters \(\psi_i\) of \(H_i\)
with \(\psi_i(v_i) \neq 1\):
\begin{multline*}
  d \cdot \cH^{<0}_{\sigma,v}(t) =
\sum_{\alpha^d=1} \alpha^{dr_v}(\Laurentpartat{1} P_{1,v})(\alpha t) \\ + \sum_\alpha \sum_i
  \frac{1}{d_i} \sum_{\substack{\psi_i \in \widehat{H_i} \\ \psi_i(v_i) = \alpha^d \neq 1}}
 {\psi_i([l'_i])}^{-1} \alpha^{dr_v} P_{\psi_i,v_i}(1) (1 - \psi_i(v_i)) \frac{1}{1 - \alpha t}.
\end{multline*}
Using~\eqref{RT3} in the form
$\widehat{\et_{\sigma_i}}(\psi_i) = {\psi_i([l'_i])}^{-1} P_{\psi_i,v_i}(1)$, and summing in
the variable
\(\alpha\) by \eqref{ELEM1} (recall that \(-d < d r_v \leq 0\)):
\begin{equation}\label{Cont2}
  \constanttermatone{\cH^{<0}_{\sigma,v}(t)} = \constanttermatone{\left(
      \frac{1}{d} \sum_{\alpha^d=1} \alpha^{dr_v} (\Laurentpartat{1}P_{1,v})(\alpha t) \right)}
  + \sum_i \frac{1}{d_i} \sum_{\substack{\psi_i \in \widehat{H_i} \\ \psi_i(v_i)\neq1}}
  \widehat{\et_{\sigma_i}}(\psi_i).
\end{equation}

\subsection{Additivity formula for torsion}
\label{sec:addit-form-tors}

Now, we are ready to establish an additivity formula for the torsion.  By
\eqref{et} and \eqref{eq:11} 
\begin{equation*}
  \et_\sigma(\Sigma) = \constanttermatone{\cH_{\sigma,v}(t)} - \frac{1}{d}
  \constanttermatone{P_{1,v}(t)} + 
  \sum_i \frac{1}{d_i} \sum_{\substack{\psi_i \in \widehat{H_i} \setminus 1 \\ \psi_i(v_i)=1}}
  \widehat{\et_{\sigma_i}}(\psi_i).
\end{equation*}
Then, using $\cH_{\sigma,v} = \cH_{\sigma,v}^{\mathid{pol}} + \cH_{\sigma,v}^{<0}$ and
\eqref{Cont2} we get the next identity.  We highlight it, since it shows the
more conceptual source of the correction constant in \eqref{AT}:
\begin{equation}\label{tors}
  \et_\sigma(\Sigma) - \sum_i \et_{\sigma_i}(\Sigma_i) = \cH_{\sigma,v}^{\mathid{pol}}(1) + \frac{1}{d}
  \constanttermatone{\left( \sum_{\alpha^d=1} \alpha^{dr_v}
      (\Laurentpartat{1}P_{1,v})(\alpha t) - P_{1,v}(t) \right)}.
\end{equation}

The last two terms depend only on the coefficients of terms
with non-positive degree of the
 Laurent expansion of
$P_{1,v}$ at $1$.  These terms
can be computed  elementarily:
\begin{multline*}
  P_{1,v}(t) = \prod_w {(1-t^{a_{v w}})}^{\delta_w-2} \\
  =
  \frac{1}{a_{v v}} \left(
    \frac{1}{{(t-1)}^2} + \frac{1 + \alpha_v / 2}{t - 1} + \frac{{(\alpha_v + 1)}^2}{8}
    + \frac{\beta_v - 1}{24} + O(t-1) \right).
\end{multline*}
 Hence (\ref{ELEM}) and a simple
computation provides \eqref{AT}.

\section{Proof of Theorem (\ref{AN}) and Corollary (\ref{TH}).}\label{Sec:6}

In this section we combine our surgery formula with the main result of Okuma
from \cite{Opg} to derive the results of §\ref{Back}. 

Okuma's article \cite{Opg} 
uses a constant invariant of the Taylor expansion at the origin of
$R$ in place of our $R^{\mathid{pol}}(1)$.
This constant invariant was later called the periodic constant, which
terminology we adopt.

In the first paragraphs we prove that they are equal.
After the proof appeared in a public preprint of this article,
the result (Lemma~\ref{LPC}) was also incorporated into Okuma's article
as Proposition 4.8.

\begin{definition}[{Periodic constant \cite[3.9]{NO1}, \cite[just before
    Proposition 4.8]{Opg}}]
  \label{PC}
  Let  $F(t) = \sum_{i\geq 0} a_i t^i$  be a formal power series.  
  Suppose that for some positive integer $p$,
  the expression $\sum_{i=0}^{pn-1} a_i$ is a polynomial $P_p(n)$ in the
  variable $n$.  Then the constant term of $P_p(n)$ is independent of $p$.
  We call this constant term the \emph{periodic constant} of $F$ and
  denote it by $\pc{F}$.
\end{definition}

For rational functions, one has the following equivalent 
description of the periodic constant.  Here,
we identify the rational function $R$ with its Taylor expansion at the 
origin.

\begin{lemma}\label{LPC}
  Let $R$ be a rational function having poles only at infinity and roots of
  unity.  Then $R$ has a periodic constant and $\pc{R} =
  R^{\mathid{pol}}(1)$, where $R^{\mathid{pol}}$ is the polynomial part of $R$
  as in (\ref{FN})(\ref{FN1}).
\end{lemma}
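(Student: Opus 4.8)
The plan is to reduce the statement to a direct computation using the decomposition $R = R^{\mathid{pol}} + R^{<0}$ from (\ref{FN})(\ref{FN1}), together with the refinement $R^{<0} = \sum_{\alpha \neq 0} \Laurentpartat{\alpha} R$. Since $R$ has poles only at roots of unity, every $\alpha$ occurring is a root of unity, so it suffices to prove the lemma separately for $R^{\mathid{pol}}$, which is trivial, and for each elementary summand of the form $1/{(1-\alpha t)}^k$ with $\alpha^N = 1$. First I would check that the periodic constant is additive in $R$ (immediate from Definition~\ref{PC}, since if $\sum_{i=0}^{pn-1}a_i$ and $\sum_{i=0}^{pn-1}b_i$ are polynomials in $n$ for a common period $p$, so is their sum, and constant terms add), and that for a polynomial $R$ one has $\pc{R} = 0 = R^{\mathid{pol}}(1) - R(1)\cdot 0$; more precisely, for a polynomial the partial sums stabilize, $\sum_{i=0}^{pn-1}a_i$ is eventually the constant $R(1)$-independent total... wait, I must be careful: for a polynomial $R = R^{\mathid{pol}}$ the Taylor coefficients vanish for large $i$, so $\sum_{i=0}^{pn-1}a_i$ equals the (constant) sum of all coefficients for $n$ large, which is $R(1)$; hence $\pc{R^{\mathid{pol}}} = R^{\mathid{pol}}(1)$, exactly as claimed since $R^{<0} = 0$ here.

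Next I would treat the building blocks $R_{\alpha,k}(t) \coloneqq {(1-\alpha t)}^{-k}$ with $\alpha$ a primitive $m$-th root of unity and $k \geq 1$. These have $R^{\mathid{pol}}_{\alpha,k} = 0$, so I must show $\pc{R_{\alpha,k}} = 0$. The Taylor expansion is $\sum_{i \geq 0} \binom{i+k-1}{k-1}\alpha^i t^i$, and I would choose period $p \coloneqq m$, so that $\sum_{i=0}^{mn-1}\binom{i+k-1}{k-1}\alpha^i$ can be analyzed by splitting $i = mj + r$ with $0 \leq r < m$: each residue class contributes $\sum_{j=0}^{n-1}\binom{mj+r+k-1}{k-1}\alpha^r$, and summing $\binom{mj+r+k-1}{k-1}$ over $j$ from $0$ to $n-1$ is a polynomial in $n$ of degree $k$ with zero constant term (it is a finite difference telescoping, using $\sum_{j=0}^{n-1}\binom{mj+c}{k-1}$ being a polynomial vanishing at $n=0$). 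Multiplying by the constant $\alpha^r$ and summing over $r$ keeps the constant term zero, so $\pc{R_{\alpha,k}} = 0$. Combined with additivity and the polynomial case, this gives $\pc{R} = \pc{R^{\mathid{pol}}} + \sum \pc{R_{\alpha,k}\text{-pieces}} = R^{\mathid{pol}}(1) + 0$.

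Alternatively — and this may be the cleaner route to write up — I would use (\ref{ELEM})-style closed forms directly: for the partial-sum operator, note that if $F(t) = \sum a_i t^i$ then $\sum_{i=0}^{N-1} a_i = [\text{coefficient extraction of } F(t)/(1-t) \text{ up to } t^{N-1}]$, and more usefully, for a rational $R$ with poles at roots of unity, $R(t)/(1-t)$ is again rational with poles at roots of unity, and its polynomial-part evaluated appropriately encodes the polynomial $P_p(n)$. The key identity is that the partial sum $\sum_{i=0}^{pn-1} a_i$, as a function of $n$, is governed by the principal parts of $R(t)/(1-t^p) \cdot (1-t^p)/(1-t)$ at $t=1$ — I would make this precise by writing $R(t) = R^{\mathid{pol}}(t) + \sum_\alpha \Laurentpartat{\alpha}R(t)$ and observing that for $\alpha \neq 1$ the contribution to $\sum_{i=0}^{pn-1}a_i$ is $p$-periodic in a way that, with period $p$ a multiple of all pole orders, becomes constant in $n$ — specifically zero constant term after the $\alpha^{pn} = 1$ simplification kills the $n$-dependence; and the $\alpha = 1$ principal part contributes the polynomial whose constant term I must identify with $R^{\mathid{pol}}(1)$.

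The main obstacle I anticipate is the last bookkeeping point: showing that assembling the constant terms from all the pieces produces exactly $R^{\mathid{pol}}(1)$ and not $R^{\mathid{pol}}(1)$ plus some leftover from the $\alpha = 1$ principal part. The resolution should be that $\Laurentpartat{1}R$ contributes partial sums that are genuinely polynomial in $n$ but with \emph{zero} constant term (by the same telescoping computation as for general $\alpha$, the point being that the sum starts at $i=0$), so that the only surviving constant term is $R^{\mathid{pol}}(1)$ coming from the fact that $R^{\mathid{pol}}(t) + (\text{stabilized tail of } R^{<0})$ has total coefficient sum equal to $R^{\mathid{pol}}(1)$ — here one uses that $R^{<0}(1)$ is finite (no pole at $1$ after... no: $R^{<0}$ may have a pole at $1$). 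Let me instead rely on: $\sum_{i=0}^{pn-1}a_i = $ (polynomial part contribution, eventually constant $= R^{\mathid{pol}}(1)$ since $R^{\mathid{pol}}$ has finitely many terms summing to $R^{\mathid{pol}}(1)$) $+$ (contributions from each $\Laurentpartat{\alpha}R$, each a polynomial in $n$ with zero constant term). Verifying the zero-constant-term claim uniformly in $\alpha$, including $\alpha = 1$, via the binomial telescoping is the crux; it is elementary but needs the observation that the empty sum ($n = 0$) is zero, forcing the constant term to vanish.
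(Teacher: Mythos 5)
Your proof is correct and follows essentially the same approach as the paper: decompose \(R\) into its polynomial part plus elementary rational fractions with poles at roots of unity, use additivity of the periodic constant, compute \(\pc{R^{\mathid{pol}}}=R^{\mathid{pol}}(1)\) since a polynomial's coefficient sums stabilize, and check each elementary piece has periodic constant zero via a binomial telescoping. The paper's version is marginally cleaner in its bookkeeping: it groups the principal parts over a common denominator, writing \(R^{<0}\) as a finite sum of terms \(a_{kj}\,t^j/(1-t^p)^{k+1}\) with \(p\) divisible by the orders of all poles, so each elementary piece is supported on a single residue class mod \(p\) and the partial sum reduces directly to \(\sum_{l=0}^{n-1}\binom{k+l}{k}=\binom{k+n}{k+1}\), which visibly vanishes at \(n=0\), avoiding the extra split \(i=mj+r\) in your argument.
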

\begin{proof} Write
\begin{equation*}
  R(t) = R^{\mathid{pol}}(t) + \sum_{\substack{k \geq 0 \\ 0 \leq j < p}}
  a_{k j}\frac{t^j}{{(1-t^p)}^{k+1}} \quad (a_{k j} \in \setC),
\end{equation*}
where the sum is finite.  Note that if two formal power series $F_1$ and
$F_2$ have periodic constants then $\pc{(F_1+F_2)} = \pc{F_1} + \pc{F_2}$.  Also,
every polynomial $A$ has a periodic constant, namely, $\pc{A} = A(1)$.  Hence it
is enough to prove that
$t^j {(1-t^p)}^{-(k+1)}= \sum_{l\geq 0} \binom{k+l}{k} t^{lp+j}$
 admits a periodic  constant, which is \(0\).  Indeed, the constant term of 
$\sum_{l=0}^{n-1} \binom{k+l}{k} = \binom{k+n}{k+1}$ 
is \(0\) as a polynomial in $n$.
\end{proof}

\begin{proof}[Proof of Theorem (\ref{AN})]
We prove the statement by induction on the number of vertices in the
dual resolution graph of the singularity \((X,o)\).

First, let us suppose that the class satisfies the Seiberg\textendash Witten invariant
conjecture~\ref{CONJ}.  Then expressing the Seiberg\textendash Witten invariants from
\eqref{eq:3} and substituting the result into \eqref{eq:1}, we obtain
\eqref{eq:6} (for all singularity \((X,o)\) in the class and all splitting vertex \(v\)).

To prove the converse, let us assume that the class satisfies \eqref{eq:6}.  We prove
the Seiberg\textendash Witten invariant conjecture (\ref{CONJ}) for every member of the
family by induction on the number of vertices of the dual resolution graph.
For rational singularities, (\ref{CONJ}) is true by \cite[Theorem~6.2]{Line}.
This starts the induction.

For a non-rational \((X,o)\) in the class, let us choose a vertex \(v\) of the
dual resolution graph satisfying \eqref{eq:6}.  Let \(l' \in R\).  Then
\(R_i(l') \in R(\Gamma_i) + L{(\Gamma_i)}_{e}\) 
by \eqref{MT2}, since the first term
$r_v E^*_{v_i,i}$ of its right hand side is a non-negative rational cycle, 
 and its second term is contained in \(R(\Gamma_i)\).  So, by
the induction hypothesis and Remark~(\ref{rem:extend}), Equation~\eqref{eq:3}
applies to \((X_i,o)\) and \(R_i(l')\).
Combining these with \eqref{eq:1} and \eqref{eq:6} for \((X,o)\) and \(v\), we
obtain \eqref{eq:3} for \((X,o)\).
\end{proof}

\begin{proof}[Proof of Corollary (\ref{TH})]
The corollary follows from Theorem (\ref{AN}) by Okuma's results from
\cite{Opg}, which show that the class of splice-quotient singularities satisfy
all the necessary conditions.

Specifically, for every splice-quotient singularity \((X,o)\) and vertex \(v\)
of the dual resolution graph, the singularities \((X_i,o)\) are also
splice-quotient by \cite[2.16]{Opg}.

Moreover, the additivity formula~\eqref{eq:6} for all \(l' \in R\) 
and $v$ with degree at least \(3\) is a
combination of \cite[Theorem 4.5 and Lemma 4.2(3)]{Opg} and Lemma (\ref{LPC}).
\end{proof}

\section{Examples}\label{Sec:7}

\subsection{$\mathbf{\Sigma=S^3_{-d}(K)}$}
\label{Knot}

Let $K\subset S^3$ be an algebraic knot, i.e.\ the link of an analytic irreducible
plane curve singularity $f\colon (\setC^2,0)\to (\setC,0)$.  Let $\mu$ and $\Delta(t)$ be its Milnor
number and Alexander polynomial, respectively.  Let $\Sigma \coloneqq S^3_{-d}(K)$
be obtained by $(-d)$-surgery ($d \in \setN^+$) along $K \subset S^3$.  The Heegaard Floer
homology of $\Sigma$ was computed in \cite{nemethi05._heegaar_floer_s_k} in
terms of $\Delta$ (see also
\cite[Theorem~4.1]{OSZuj}).  Here we recover the formula \cite[4.3]{nemethi05._heegaar_floer_s_k} for $\ssw_*(\Sigma)$
from our results.

Let the (minimal) good resolution of $(\setC^2,f^{-1}(0))$ be given by the
schematic diagram

\begin{center}
  \begin{picture}(0,0)%
\includegraphics{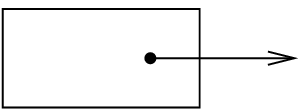}%
\end{picture}%
\setlength{\unitlength}{4144sp}%
\begingroup\makeatletter\ifx\SetFigFontNFSS\undefined%
\gdef\SetFigFontNFSS#1#2#3#4#5{%
  \reset@font\fontsize{#1}{#2pt}%
  \fontfamily{#3}\fontseries{#4}\fontshape{#5}%
  \selectfont}%
\fi\endgroup%
\begin{picture}(1467,474)(889,-2638)
\put(1261,-2446){\makebox(0,0)[lb]{\smash{{\SetFigFontNFSS{8}{9.6}{\familydefault}{\mddefault}{\updefault}{\color[rgb]{0,0,0}$\Gamma_1$}%
}}}}
\put(1531,-2581){\makebox(0,0)[lb]{\smash{{\SetFigFontNFSS{8}{9.6}{\familydefault}{\mddefault}{\updefault}{\color[rgb]{0,0,0}$v_1$}%
}}}}
\put(2341,-2446){\makebox(0,0)[lb]{\smash{{\SetFigFontNFSS{8}{9.6}{\familydefault}{\mddefault}{\updefault}{\color[rgb]{0,0,0}$K$}%
}}}}
\end{picture}%

\end{center}

Write $m_f$ for the vanishing order of the lifting of $f$ along the
exceptional divisor $E_{v_1}$.  Then (see \cite{nemethi05._heegaar_floer_s_k}), a possible plumbing
graph of $\Sigma$ is

\begin{center}
  \begin{picture}(0,0)%
\includegraphics{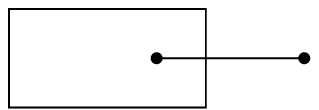}%
\end{picture}%
\setlength{\unitlength}{4144sp}%
\begingroup\makeatletter\ifx\SetFigFontNFSS\undefined%
\gdef\SetFigFontNFSS#1#2#3#4#5{%
  \reset@font\fontsize{#1}{#2pt}%
  \fontfamily{#3}\fontseries{#4}\fontshape{#5}%
  \selectfont}%
\fi\endgroup%
\begin{picture}(2071,474)(211,-2638)
\put(226,-2446){\makebox(0,0)[lb]{\smash{{\SetFigFontNFSS{8}{9.6}{\familydefault}{\mddefault}{\updefault}{\color[rgb]{0,0,0}$\Gamma$:}%
}}}}
\put(1261,-2446){\makebox(0,0)[lb]{\smash{{\SetFigFontNFSS{8}{9.6}{\familydefault}{\mddefault}{\updefault}{\color[rgb]{0,0,0}$\Gamma_1$}%
}}}}
\put(1531,-2581){\makebox(0,0)[lb]{\smash{{\SetFigFontNFSS{8}{9.6}{\familydefault}{\mddefault}{\updefault}{\color[rgb]{0,0,0}$v_1$}%
}}}}
\put(2206,-2581){\makebox(0,0)[lb]{\smash{{\SetFigFontNFSS{8}{9.6}{\familydefault}{\mddefault}{\updefault}{\color[rgb]{0,0,0}$v$}%
}}}}
\put(1981,-2311){\makebox(0,0)[lb]{\smash{{\SetFigFontNFSS{8}{9.6}{\familydefault}{\mddefault}{\updefault}{\color[rgb]{0,0,0}$-d - m_f$}%
}}}}
\end{picture}%

\end{center}

Let $v$ be the `new' vertex.  Then $\Gamma \setminus v$ has only one component, namely
$\Gamma_1$, which can be blown down completely, hence $\Sigma_1=S^3$.  One can verify
that $H=\setZ_d $ and it is generated by $[E^*_v]$.  Hence  $\widehat{H}$ consists
of the maps \(\rho\) given by $\rho([k E^*_v])=\xi^k $ for all $d$th roots of unity
$\xi$.  Moreover, the \Spinc{} structures of $\Sigma$ are $[q E^*_v] *
\sigma_{\mathid{can}}$ for $0 \leq q < d$.  
Then, using e.g.\ the formula \cite[11.3]{MR817982} for $\Delta$, one has
\begin{equation*}
\cH_{[qE^*_v] * \sigma_{\mathid{can}},v}(t) = \frac{1}{d} \sum_{\xi^d=1} \xi^{-q}
  \frac{\Delta(\xi t)}{{(1 - \xi t)}^2}.
\end{equation*}
One can write $\Delta(t) = 1 + (t-1)\mu / 2 + {(t-1)}^2 \sum_l a_l t^l$.  Hence
\begin{equation*}
  \cH^{\mathid{pol}}_{[qE^*_v]*\sigma_{\mathid{can}},v}(t) = \frac{1}{d} \sum_\xi \xi^{-q}
  \sum_l a_l \xi^l t^l = \sum_l a_{q+ld} t^{q+ld}.
\end{equation*}
Note that $a_{v v} = 1$, hence $(q E^*_v, E^*_v) = - q / d \in (-1,0]$ and so
$r_v = - q / d$.  Recall e.g.\ from \cite[11.1]{MR817982} that $\mu - 2 = \alpha_v$.  Thus,
using \eqref{eq:36}, we recover \cite[4.3]{nemethi05._heegaar_floer_s_k} as promised:
\begin{equation*}
  \ssw_{[qE^*_v] * \sigma_{\mathid{can}}}(S^3_{-d}(K)) = - \sum_l a_{q+ld} + \frac{{(\mu
    - 2 + d - 2 q)}^2}{8d} - \frac{1}{8}.
\end{equation*}
Similarly (with slightly more computations) one can recover the
Seiberg\textendash Witten invariant of $S^3_{-p/q}(K)$, too (here \(p/q \in \setQ\),
 \(p/q > 0\)); for a possible formula see \cite[4.5]{nemethi._heegaar_floer_s_k}.

\subsection{Seifert manifolds.}\label{star}

Let $\Sigma$ be a Seifert manifold.  Recall that either $\Sigma$ or $-\Sigma$ can be
realized as a negative definite plumbing (and $\ssw(-\Sigma)=-\ssw(\Sigma)$), hence we
may assume without loss of generality that $\Sigma=\Sigma(\Gamma)$ for a (minimal)
negative definite graph $\Gamma$. 
We will assume that $\Gamma$ is not a string (i.e.\ $\Sigma$ is not
a lens space).  Then $\Gamma$ is star-shaped; let $v$ be its
central vertex. 
There exists an affine complex surface singularity $X$ whose
link at the origin is $\Sigma$, and which admits a good $\setC^*$ action. In
particular, its affine coordinate ring $A$ is graded.

First we show how
$\cH_{\sigma,v}(t)$ and its periodic constant can be expressed from 
the Seifert invariants of $\Sigma$. 

Let ${(\alpha_i,\omega_i)}_{i=1}^r$ denote the
normalized Seifert invariants of $\Sigma$
(for more details,  see~\cite{nemethi04._seiber_witten}). 
 Set $\alpha=\operatorname{lcm}(\alpha_i : i=1, \dots, r)$ and
$o = \alpha \cdot \lvert H \rvert / \prod_i \alpha_i$.  We denote the
end-vertices (i.e.\ vertices of degree $1$) by ${\{w_i\}}_i$.
Then $[E^*_v]$ and ${\{[E^*_{w_i}]\}}_i$ generate $H$, hence $l'\in
L'$ can be written as $l' = a E^*_v + \sum_i a_i E^*_{w_i}$  modulo $L$.
Set $\widetilde{a} \coloneqq \alpha (a + \sum_i a_i / \alpha_i)$.  Then, 
by \cite[Theorem (3.1)]{nemethi04._seiber_witten}, for 
$\sigma=[l']*\sigma_{can}$  one has
\begin{equation}\label{eq:8.2.1}
  \cH_{\sigma,v}(t) = \sum_{l \geq -\widetilde{a}/o} \max\left(0, 1 + a - lb_v +
    \sum_{i=1}^r \left\lfloor \frac{-l\omega_i + a_i}{\alpha_i} \right\rfloor\right)
  t^{ol+\widetilde{a}}.
\end{equation}
In the case $\sigma=\sigma_{can}$, one has $a = a_i = \widetilde{a} = 0$.
Moreover, we claim that
\begin{equation}\label{eq:8.2.2}
  \pc{\cH_{\mathid{can}, v}}=\sum_{l\geq 0} \max\left(0, -1 + lb_v - \sum_{i=1}^r \left\lfloor \frac{-l \omega_i}{\alpha_i}
    \right\rfloor\right).
\end{equation}
The idea of the proof is the following: let us define the polynomial
\begin{equation*}
P(t) \coloneqq \sum_{l\geq 0} \max\left(0, -1 + lb_v - \sum_{i=1}^r \left\lfloor \frac{-l \omega_i}{\alpha_i}
    \right\rfloor\right) t^{ol}.
\end{equation*}
By the identity $\max(0,x)-\max(0,-x)=x$ we get that
\begin{equation*}
  \cH_{\mathid{can},v}(t)-P(t) = \sum_{l \geq 0} \left( 1 - lb_v +
    \sum_{i=1}^r \left\lfloor \frac{-l\omega_i}{\alpha_i} \right\rfloor\right)
  t^{ol}.
\end{equation*}
Then a computation shows that the periodic constant of the last expression is
zero.  Hence
$\pc{\cH_{\mathid{can}, v}} = P(1)$, which is exactly \eqref{eq:8.2.2}.

Note that by \cite{Opg,Pi1} the right hand side of (\ref{eq:8.2.1})
is the Hilbert (Poincaré) series of a graded
$A$-module.  If $\sigma=\sigma_{can}$ then this module is 
exactly  $A$.  On the other hand, by \cite{Pi1,Dolg}, 
the expression from the right hand side of (\ref{eq:8.2.2}) 
is exactly the \emph{geometric genus} $p_g$ of $(X,o)$.
In particular, we have also proved that the periodic constant of the
Poincaré series of the graded algebra $A$ is exactly the geometric genus of
the singularity. 

Now, let us apply (\ref{MTIntr}) for  $\sigma=\sigma_{can}$.
Since all the components of \(\Gamma \setminus v\) are strings, 
they support rational singularities.  Therefore, by
\cite[4.1.1]{Line}, 
\begin{equation*}
  \ssw_{\mathid{can}}(\Sigma_i) + \frac{{c_1(\widetilde{\sigma_{\mathid{can}, i}})}^2 +
  \lvert \cV(\Gamma_i) \rvert}{8} = 0.
\end{equation*}
Hence  (\ref{MTIntr}) reads as $\ssw_{\mathid{can}}(\Sigma) +
({c_1(\widetilde{\sigma_{\mathid{can}}})}^2 + \lvert \cV(\Gamma) \rvert) / 8 = - p_g$. 

Notice that this is exactly the claim of the 
Seiberg\textendash Witten invariant conjecture~(\ref{CONJ})
for weighted homogeneous singularities and for the canonical \Spinc{}
structure.  Its original proof from
\cite{nemethi04._seiber_witten} is based on completely different
combinatorial identities. 

We would like to emphasize that, in general, $\pc{\cH}$ can be a rather
complicated arithmetical expression.  E.g., when $\Sigma$ is the Seifert
\(3\)-manifold $\Sigma(a,b,c)$ (the link of $x^a+y^b+z^c$ with \(a\), \(b\), \(c\)
pairwise relative prime numbers), then $\pc{\cH_{\mathid{can},v}}$ is the number of
interior lattice points in the tetrahedron with vertices \((0,0,0)\),
\((a,0,0)\), \((0,b,0)\), \((0,0,c)\).
(This can be expressed by Dedekind sums by a result of Mordell.)
\bibliographystyle{amsplainurl}\bibliography{hivatkozas}
\end{document}